\newcommand{\jumptmp}[2]{#1\llbracket{#2}#1\rrbracket}
\newcommand{\jump}[1]{\jumptmp{}{#1}}
\newcommand{\gismo}{{\fontfamily{phv}\fontshape{sc}\selectfont 
G\pmb{+}Smo}\xspace}
\newcommand{\normal}{\mathbf{n}}
\newcommand{\thmref}[1]{Theorem~\ref{#1}}
\newcommand{\lemref}[1]{Lemma~\ref{#1}}
\newcommand{\propref}[1]{Proposition~\ref{#1}}
\numberwithin{equation}{section}
    \newcommand{\keywords}[1]{\par\addvspace\baselineskip  
     \noindent\keywordname\enspace\ignorespaces#1}
\begin{document}
\mainmatter 
% \title{Discontinuous Galerkin Isogeometric Analysis for Elliptic Problems on Surfaces
% with Heterogeneous Diffusion Coefficients }
\title{Discontinuous Galerkin Isogeometric Analysis for Elliptic Problems with 
Discontinuous Coefficients on Surfaces}
\titlerunning{dGIGA for Ellptic Problems on Surfaces}
\author{Stephen Edward Moore}
\authorrunning{S.~E.~Moore }
\institute{Department of Mathematics, 
\footnote{Corresponding address: stephen.moore@ucc.edu.gh},\\ 
University of Cape Coast,\\ Cape Coast, Ghana.\\
}
\maketitle
%-------------------------------------------------------------------------------------
\begin{abstract}
This paper is concerned with using discontinuous Galerkin isogeometric analysis 
(dGIGA) as a numerical treatment of Diffusion problems on orientable surfaces 
$\Omega \subset \mathbb{R}^3$. The computational domain or surface considered 
consist of several non-overlapping sub-domains or patches which are coupled via 
an interior penalty scheme. In Langer and Moore \cite{LangerMoore:2014a}, we 
presented \textit{a priori} error estimate for conforming computational domains 
with matching meshes across patch interface and a constant diffusion 
coefficient. However, in this article, we generalize the \textit{a priori} error 
estimate to non-matching meshes and discontinuous diffusion coefficients across 
patch interfaces commonly occurring in industry. We construct B-Spline or NURBS 
approximation spaces which are discontinuous across patch interfaces. We present 
\textit{a priori} error estimate for the symmetric discontinuous Galerkin scheme 
and numerical experiments to confirm the theory.
\keywords{
 discontinuous Galerkin, multipatch isogeometric analysis, elliptic problems, 
\textit{a priori} error analysis, surface PDE, interior penalty Galerkin, 
laplace-beltrami, discontinuous coefficients.
}
\end{abstract}
%%%%%%%%%%%%%%%%%%%%%%%%%%%%%%%%%%%%%%%%%%%%%%%%%%%%%%%%%%%%%%%%%%%%%%%%%%%%%%%%
%%%%%%
\,\,
%/-----------------------------------------------------------
\section{Introduction}
\label{sec:Introduction}
In this paper, we consider the second-order elliptic boundary value problem on a 
open, smooth, connected and oriented two dimensional surface $\Omega \subset \mathbb{R}^3$ as follows: 
find $u : \overline{\Omega} \rightarrow \mathbb{R}$ such that
\begin{align}
 \label{eqn:modelproblem}
     -\text{div}_\Omega(\alpha \nabla_\Omega u ) + u= f \,\, \text{in} \,\, 
\Omega, 
    \quad  u  = 0   \,\, \text{on} \,\, \Gamma_D, 
    \quad  \normal \cdot (\alpha \nabla_\Omega u )  = g_N   \,\, \text{on} \,\,  
\Gamma_N,
\end{align}
where the diffusion coefficient $\alpha$ is uniformly bounded i.e. $\alpha_{min} 
\leq \alpha \leq \alpha_{max}$ with positive constants $\alpha_{max}$ and 
$\alpha_{min}$, $f$ and $g_N$ are given sufficiently smooth data. The physical 
or computational domain $\Omega \subset \mathbb{R}^3$ is compact, connected and 
positively oriented surface with boundary $\partial \Omega.$ The boundary of the 
computational domain consists of the Dirichlet part $\Gamma_D$ with positive 
boundary measure and a Neumann part $\Gamma_N$ such that $\partial \Omega := 
\Gamma_D \bigcup \Gamma_N.$ The operators $\text{div}_\Omega$ and 
$\nabla_\Omega$ are the surface divergence and surface gradient respectively, 
and will be defined in Section~\ref{sec:preliminaries}. 

Partial Differential Equations (PDEs) on surfaces arise in many fields of 
application like 
material science, fluid mechanics, electromagnetics, biology and image 
processing, see e.g.\cite{DziukElliott:2013a} for several interesting 
discussions on applications.
% \\
For several years, numerical methods dedicated to the solutions of PDEs on 
manifolds including conforming and non-conforming finite element methods (FEM) 
have been well studied and applied to compute the solution of elliptic and 
parabolic evolution problems on fixed and evolving computational domains, see, 
e.g.,
 \cite{DziukElliott:2013a,DednerMadhavanStinner:2013a}. 
 We note that there are however some drawbacks to the standard surface FEM. The 
standard surface FEM has two main sources of error: the error due to the 
approximation of the infinite dimensional spaces with finite dimensional spaces 
in the variational problem and the geometric error resulting from the 
approximation of the surface. These drawbacks are due to the discrete 
variational formulation of the PDE that is constructed on a triangulated surface 
which contains the finite elements space as discussed by Dzuik and Elliott in 
\cite{DziukElliott:2013a}.
 
% To remedy the drawbacks from the 
As an alternative aproach to the surface FEM, we resort to Isogeometric Analysis 
(IGA).
The numerical scheme is based on B-splines and Non-Uniform B-splines (NURBS) and 
was proposed to approximate solutions of PDEs, see e.g. 
\cite{HughesCottrellBazilevs:2005a}. The method uses the same class of basis 
functions for representing both the geometry of the computational domain and 
also approximating the solution of problems modeled by PDEs. 
By using the exact representation of the geometry, the geometrical errors 
introduced by approximation of computational domains in the surface FEM are 
eliminated. 
This is especially of importance in the discretization of PDEs on surfaces.
However, we note that IGA can also have geometry-related failures such as holes, 
singularities, etc. see e.g.\cite{WuWangMourrainNkongaCheng:2017a}. 
Such failures or features are beyond the scope of this article.  
IGA uses B-splines or Non-Uniform Rational B-Splines (NURBS) basis functions 
which are standard in Computer Aided Design (CAD). The NURBS basis functions 
have several advantages making them suitable for analysis, see 
\cite{HughesCottrellBazilevs:2005a}. 
The mathematical analysis of the approximation properties, the stability and 
discretization error estimates of NURBS spaces and analysis of several 
refinement strategies, i.e., $h$-$p$-$k$ refinements can be found in 
\cite{BazilevsBeiraoCottrellHughesSangalli:2006a}.
In many practical applications, the computational domains cannot be represented 
by a single B-spline or NURBS domain but by several patches or sub-domains. 
In this sense, single patch IGA and multi-patch IGA have been addressed in 
\cite{DaVeigaBuffaSangalliVazquez:2014a}. 

Alternatively, multi-patches can also be coupled via interior penalty Galerkin 
methods. In our earlier articles, see e.g., 
\cite{LangerMoore:2014a,LangerMantzaflarisMooreToulopoulos:2014a,Moore:2018a}, 
we analyzed the multi-patch discontinuous Galerkin IGA (dGIGA) for diffusion and 
biharmonic problems and presented several convincing numerical results for 
conforming domains with matching meshes. 
However, in this paper, we will generalize the analysis to include non-matching 
meshes with jumping diffusion coefficients across patch boundaries and present 
\textit{a priori} error estimates for diffusion problems. Our analysis follows 
the monograph \cite{DiPietroErn:2012a} and requires three main ingredients; 
discrete stability, consistency and boundedness of the discrete bilinear form. 
Then using the approximation estimates, see e.g., 
\cite{BazilevsBeiraoCottrellHughesSangalli:2006a}, we finally derive \textit{a 
priori} error estimate. The linear system obtained from the discretization of 
the problem is solved by means of a preconditioned conjugate gradient (PCG) with 
a scaled Dirichlet preconditioner as presented in primal isogeometric tearing 
and interconnecting (dG-IETI-DP) see e.g., \cite{Hofer:2018}.

The rest of the paper is organized as follows; Section~\ref{sec:preliminaries} 
gives a brief introduction to function spaces, weak formulation, NURBS surfaces 
and geometrical mappings and isogeometric analysis. We present the dGIGA scheme 
in Section~\ref{sec:dGIGASchemeFormulation}. In 
Section~\ref{sec:AnalysisOfDiscretizationError}, we present the multi-patch 
dGIGA  and the analysis of the dGIGA scheme. The \textit{a priori} error 
estimate is presented in Section~\ref{sec:erroranalysisOfdgigadiscretization}. 
We present numerical results for an open surface and a closed surface with 
non-matching meshes 
%We also present some results for the robustness of the dG-IETI-DP solver with 
respect to the jumping diffusion coefficient 
in Section~\ref{sec:NumericalResults}. Finally, we conclude and give an outlook.
%----------------------------------------
\section{Preliminaries}
\label{sec:preliminaries}
%%******************************************************************************
In this section, we introduce briefly introduce Sobolev spaces, NURBS surfaces 
and isogeometric analysis method, see e.g. 
\cite{AdamsFournier:2008,HughesCottrellBazilevs:2005a} for detailed study.
Firstly, let the computational domain $\Omega$ be a compact smooth and oriented 
surface 
with boundary $\partial \Omega$.  We introduce the Sobolev space 
$H^s(\Omega) :=\{v \in L_2(\Omega) \, : \, D^{\kappa} v \in L_2(\Omega), \, \, 
\text{for} \, \, 0 \leq |\kappa| \leq s  \}$, where $L_2(\Omega)$ denote the 
space of square integrable functions and $\kappa = (\kappa_1,\ldots,\kappa_d)$ 
be a multi-index with non-negative integers $\kappa_1,\ldots,\alpha_d$, and  
$|\kappa| = \kappa_1 +\ldots + \kappa_d,$ $D^{\kappa} := 
\partial^{|\kappa|}/\partial x^\kappa.$ We associate the Sobolev space 
$H^s(\Omega)$ with the norm 
 $\|v\|_{H^s(\Omega)} = \left( \sum_{0 \leq |\kappa| \leq s} \|D^\kappa 
v\|^2_{L_2(\Omega)} \right)^{1/2}.$ 

The variational formulation of the surface diffusion problem 
\eqref{eqn:modelproblem} reads:
find $u \in V_0$ such that 
\begin{align}
\label{eqn:variationalformulation}
     a(u,v) & = \ell(v), \quad \forall v \in V_0,
\end{align}
where the bilinear and linear forms are given by 
\begin{align}
 a(u,v) = \int_{\Omega} \alpha \nabla_\Omega u \cdot \nabla_\Omega v + uv \,\, 
dx \quad\text{and} \quad 
 \ell(v) = \int_\Omega f v\, dx + \int_{\Gamma_N} g_N v\,ds,
\end{align}
with
$V_0 :=\{v \in H^1(\Omega) : v = 0 \quad  \text{on} \quad \Gamma_D \}.$ 
The existence and uniqueness of the solution of such a variational problem 
\eqref{eqn:variationalformulation} follows the standard arguments of Lax-Milgram 
lemma if $u \in H^2(\Omega)$ satisfies 
\begin{align}
 \label{eqn:laxmilgram}
 \| u \|_{H^2(\Omega)} \leq \|f\|_{L_2(\Omega)},
\end{align}
see e.g. \cite{Wloka:1987a} for further details.
%-------------------------------------------
\subsection{NURBS Geometrical Mapping and Surfaces}
\label{subsec:surfaces}
{\allowdisplaybreaks
%%%%%%%%%%%%%%%%%%%%%%%%%
Let $\xi = (\xi_1,\xi_2) \in \mathbb{R}^2$ be a vector-valued independent 
variable in the parameter domain $\widehat{\Omega}$. By means of a smooth and 
invertible geometrical mapping $\mathrm{\Phi}$, the computational  domain 
$\Omega \subset \mathbb{R}^3$ is defined as
 \begin{equation}
\label{eqn:geometricalmapping}
   \mathrm{\Phi} : \widehat{\Omega} \rightarrow \Omega \subset \mathbb{R}^3, 
  \quad 
  \mathbf{\xi} \rightarrow x = \mathrm{\Phi}(\mathbf{\xi}),
\end{equation}
where the parameter domain $\widehat{\Omega} \subset \mathbb{R}^2$ as 
illustrated in Fig.~\ref{fig:igamap}.
% \end{definition}

%---------------------------------
  \begin{figure}[th!]
  \centering
      \includegraphics[width=0.8\textwidth]{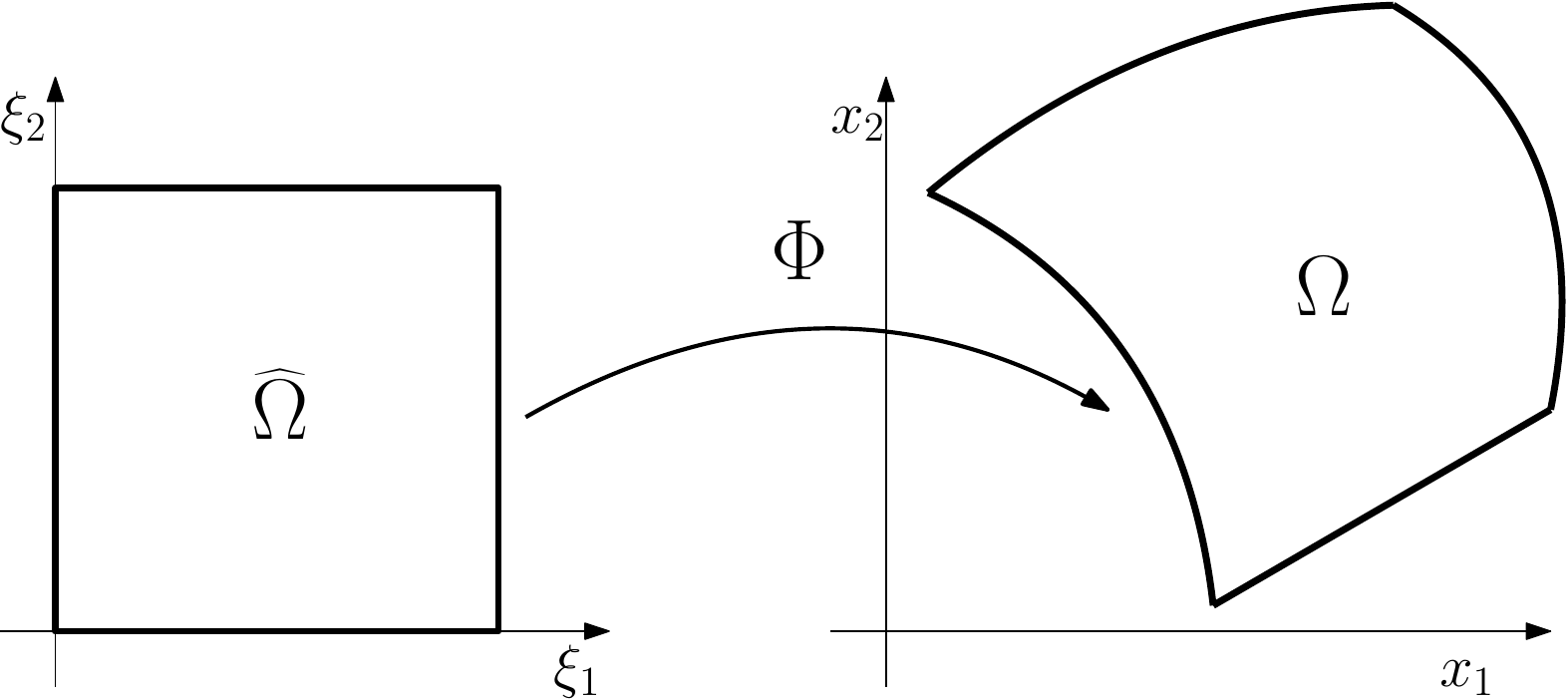}
      \caption{Illustration of the geometrical map $\Phi : \widehat{\Omega} 
\subset \mathbb{R}^2 \rightarrow \Omega \subset \mathbb{R}^3$ for a patch.}
      \label{fig:igamap}
 \end{figure}
%---------------------------------

We introduce briefly some important mathematical tools necessary for the 
analysis of surface PDEs. The following objects are obtained by means of the 
geometrical mapping \eqref{eqn:geometricalmapping} in the parameter domain. The 
Jacobian $\widehat{J},$ first fundamental form $\widehat{F}$ and the determinant 
$\widehat{g}$ of the geometrical mapping are respectively given by
\begin{align}
 \widehat{J} 
	&:=
	\left[ 
	\frac{\partial \mathrm{\Phi}_k}
 	{\partial \mathbf{\xi}_l}	
	\right] \in \mathbb{R}^{3 \times 2}, \quad k  = 1,2,3, \,l  = 
1,2,\label{eqn:jacobian}\\ 
 \label{eqn:firstfundamentalanddeterminant}
 \widehat{F}(\xi) &= \left(\widehat{J}(\xi) \right)^{T}\widehat{J}(\xi) \in 
\mathbb{R}^{2 \times 2} \quad \text{and} \quad \widehat{g}(\xi)= \sqrt{\det 
\left({\widehat{F}(\xi) }\right)} \in \mathbb{R}.
\end{align}

Next, we present some differential operators by using notations in the parameter 
domain.
We consider a smooth function $\phi$ defined on the manifold $\Omega,$ by using 
the invertible geometrical mapping \eqref{eqn:geometricalmapping} to obtain   
\begin{equation}
 \label{eqn:invertiblemap}
 \phi ( x) = \widehat{\phi}(\xi) \circ \Phi^{-1}(x), \quad x \in \Omega,
\end{equation}
where $\widehat{\phi}(\xi) = \phi (\mathrm{\Phi}(\xi)).$
Using the gradient operator in the parameter space $\nabla \widehat{\phi}$, the 
tangential gradient of the manifold is given by 
\begin{equation}
\label{eqn:tangentialgradient}
  \nabla_{\Omega}\phi (x):= 
\widehat{J}(\mathbf{\xi})\widehat{F}^{-1}(\mathbf{\xi}) \nabla 
\widehat{\phi}(\mathbf{\xi}) \circ \Phi^{-1}(x).
\end{equation}
The divergence operator for the vector-valued function can be written as 
\begin{equation}
\label{eqn:divergenceoperator}
  \nabla_{\Omega} \cdot \phi (x):= \dfrac{1}{\widehat{g}\mathbf{\xi})} \nabla 
\cdot \left[\widehat{g}(\mathbf{\xi}) \widehat{F}^{-1}(\xi)\widehat{J}^T(\xi) 
\widehat{\phi}(\mathbf{\xi}) \right] \circ \Phi^{-1}(x) .
\end{equation}
The Laplace-Beltrami operator on the manifold $\Omega$ is defined for a twice 
continuously differentiable function $\phi :\Omega \rightarrow \mathbb{R}$ as 
\begin{equation}
\label{eqn:laplacebeltrami}
  \Delta_{\Omega}\phi (x) =  \dfrac{1}{\widehat{g}(\mathbf{\xi})} \nabla \cdot 
\left[\widehat{g}(\mathbf{\xi}) \widehat{F}^{-1}(\mathbf{\xi}) 
  \nabla \widehat{\phi}(\mathbf{\xi})\right] \circ \Phi^{-1}(x) .
\end{equation}
The unit normal vector on the manifold $\Omega \subset \mathbb{R}^3$ is obtained 
by the geometrical mapping of 
\begin{equation}
 \label{eqn:unitnormalvector}
  \widehat{\normal}(\xi) := \dfrac{\widehat{t}_1(\xi) \times 
\widehat{t}_2(\xi)}{\| \widehat{t}_1(\xi) \times \widehat{t}_2(\xi)\|},
\end{equation}
where $\widehat{t}_l(\xi):= \partial \Phi(\xi) / \partial \xi_l$ is the tangent 
vector to a curve in $\mathbb{R}^3$ with $l = 1,2.$ The manifold $\Omega$ has a 
tangent plane at $\xi$ if the tangent vectors are linearly independent.

Finally, by means of the geometrical mapping \eqref{eqn:geometricalmapping}, we 
can write the Jacobian, first fundamental form and the determinant on the 
computational domain $\Omega$ as follows
\begin{align}
 \label{eqn:differentialoperatormanifold}
  J(x)  = \widehat{J}(\xi) \circ \mathrm{\Phi}^{-1}(x) ,  \quad 
  F(x) = \widehat{F}(\xi) \circ \mathrm{\Phi}^{-1}(x)  \quad \text{and} \quad 
  g(x) = \widehat{g}(\xi) \circ \mathrm{\Phi}^{-1}(x).
\end{align}
}

%-------------------------------------------
\subsection{NURBS and Isogeometric Analysis}
\label{subsec:bsplineandnurbs}
We begin by introducing the univariate B-splines since they
are usually the industry standard.% B-splines are usually tensor products thus 
Given positive integers $p$ and $n,$ we define a vector 
$\mathrm{\Xi} := \left \{0=\xi_1,\ldots,\xi_{n+p+1}=1\right\}$ 
with a non-decreasing sequence of real numbers in the unit interval or parameter 
domain $\widehat{\Omega} = [0,1]$ called a knot vector. Given  $\mathrm{\Xi}, p 
\geq 1,$ and $n$ the number of basis functions, the univariate B-spline 
functions are defined by the following recursion formula
\begin{align}
\label{eqn:bsplinebasisfunction}
 \widehat{B}_{i,0}(\xi)  & = \left\{
   \begin{aligned}
     & 1 & \text{if} \quad & \xi_{i} \leq \xi < \xi_{i+1},\\
     & 0 & \text{else}, & \notag \\
   \end{aligned}
    \right. \\
     \widehat{B}_{i,p}(\xi)  & = \frac{\xi-\xi_{i}}{\xi_{i+p}
     - \xi_{i}}\widehat{B}_{i,p-1}(\xi)
     +\frac{\xi_{i+p+1}-\xi}{\xi_{i+p+1}-\xi_{i+1}}\widehat{B}_{i+1,p-1}(\xi),
\end{align}
where a division by zero is defined to be zero. 
We note that a basis function of degree $p$ is $(p-m)$ times continuously 
differentiable across a knot value with the multiplicity $m$. 
If all internal knots have the multiplicity $m = 1$, then B-splines of degree 
$p$ are globally $(p-1)-$continuously differentiable.  

%-------------------
The bivariate B-spline basis functions are tensor products of the univariate 
B-spline basis functions \eqref{eqn:bsplinebasisfunction}. 
Let $ \mathrm{\Xi}_k = \left \{\xi_{1,k},\ldots,\xi_{n_k+p_k+1,k}\right\}$ 
be the knot vectors for every direction $k = 1,2$.
Let $\mathbf{i}:= (i_1,i_2),  \mathbf{p}:= (p_1,p_2)$ and the set 
$\overline{\mathcal{I}}=\{\mathbf{i} = (i_1,i_2) : i_k= 1,2, \ldots, n_k; \;   k 
= 1,2\} $ be multi-indicies.  Then the tensor product B-spline basis functions 
are defined by
\begin{equation}
\label{eqn:multivariatebspline}
   \widehat{B}_{\mathbf{i},\mathbf{p}}(\xi) := 
\prod\limits_{k=1}^{2}\widehat{B}_{i_k,p_k}(\xi_k),
\end{equation}
where $\xi = (\xi_1,\xi_2) \in \widehat{\Omega} = (0,1)^2.$ The univariate and 
bivariate B-spline basis functions are defined in the parametric domain by means 
of the corresponding B-spline basis functions $\{ 
\widehat{B}_{\mathbf{i},\mathbf{p}} \}_{\mathbf{i} \in 
\overline{\mathcal{I}}}.$ 

The distinct values $\xi_l, l=1,\ldots,n$ of the knot vectors $\mathrm{\Xi}$ 
provides a partition of $(0,1)^2$ creating a mesh $\widehat{\mathcal{K}}_h$ in 
the parameter domain where $\widehat{K}$ is a mesh element. 
%-------------------
The computational domain is described by means of a geometrical mapping 
$\mathbf{\Phi}$ such that  $\Omega = \mathbf{\Phi}(\widehat{\Omega})$ and 
\begin{align}
  \mathbf{\Phi}(\xi) := \sum_{\mathbf{i} \in \overline{\mathcal{I}}} 
C_\mathbf{i} \widehat{B}_{\mathbf{i},\mathbf{p}}(\xi),
\end{align}
where $C_\mathbf{i} $ are the control points. 

We define the basis functions in the computational domain by means of the 
geometrical mapping as 
$B_{\mathbf{i},\mathbf{p}} := \widehat{B}_{\mathbf{i},\mathbf{p}} \circ 
\mathbf{\Phi}^{-1}$ and the discrete function space by 
\begin{equation}
 \label{eqn:discretefunctionspace}
 \mathbb{V}_h = \text{span}\{B_{\mathbf{i},\mathbf{p}}: \mathbf{i} \in  
\overline{\mathcal{I}} \}. 
\end{equation}
%---------------------------------
Finally, the NURBS isogeometric analysis scheme reads as folows; Find $u_h \in 
\mathbb{V}_h$ such that
\begin{equation}
 \label{eqn:discretevariationalform}
 a(u_h,v_h)=\ell(v_h), \quad \forall v_h \in \mathbb{V}_h,
\end{equation}
with $\mathbb{V}_h \subset V_0.$

%---------------------------------
However, for many practical applications, the physical domain $\Omega$ consists 
of non-overlapping domains $\Omega_i, i=1,\dots,N$ called subdomains denoted by 
$\mathcal{T}_h := \{ \Omega_i \}_{i=1}^N$ such that $\overline{\Omega} = 
\bigcup_{i=1}^N \overline{\Omega}_i$ and $ \Omega_i \cap \Omega_j = \emptyset$ 
for $i \neq j.$ Each patch is the image of an associated geometrical mapping 
$\mathbf{\Phi}_i$ such that $\mathbf{\Phi}_i(\widehat{\Omega})= \Omega_i, 
i=1,\ldots,N,$ see Fig.~\ref{fig:multipatchigamap}.
%---------------------------------
  \begin{figure}[thb!]
  \centering
      \includegraphics[width=0.8\textwidth]{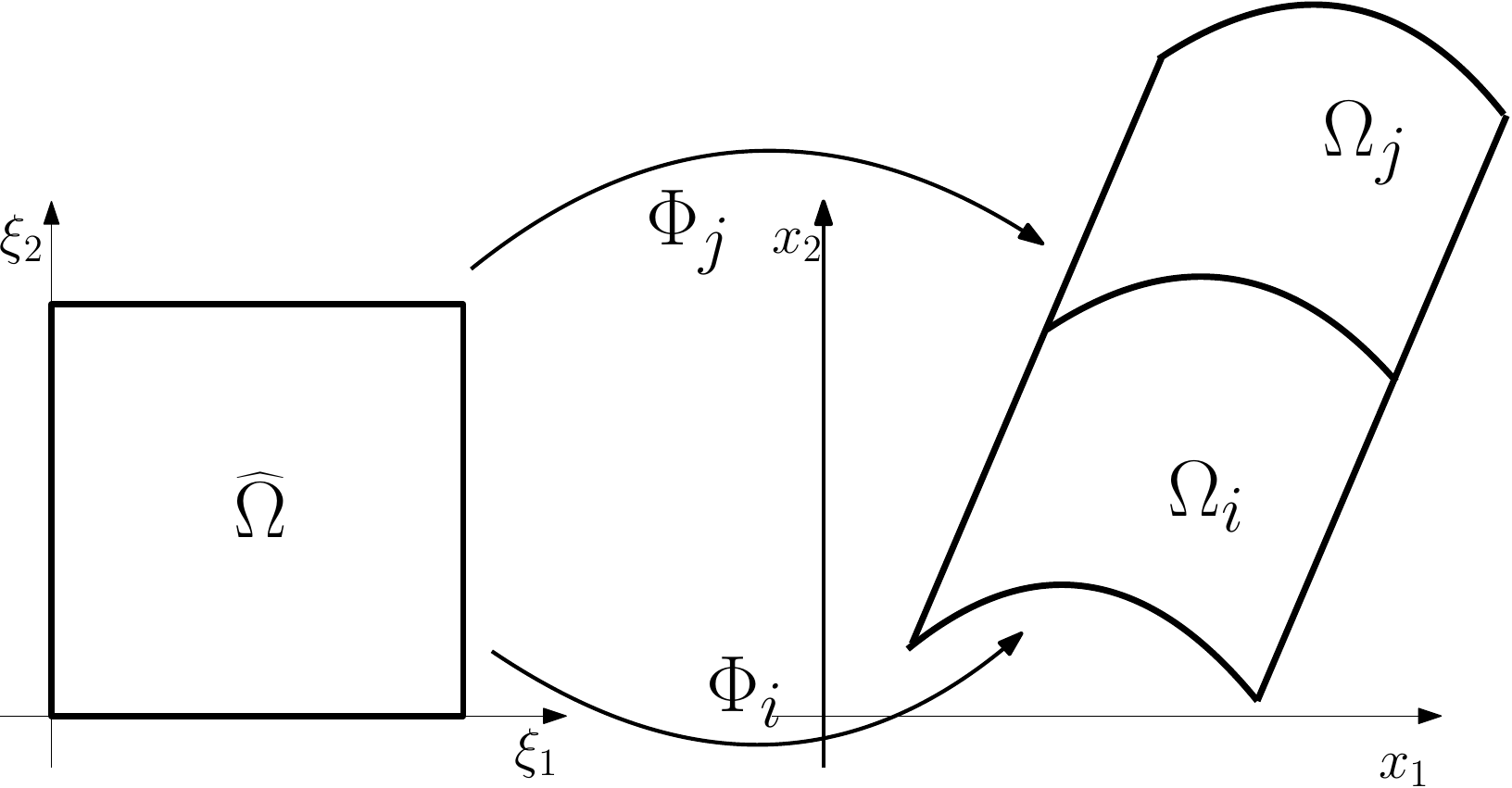}
      \caption{Illustration of the multi-patch isogeometric analysis map $\Phi_i 
(\widehat{\Omega}) =  \Omega_i$ and $\Phi_j (\widehat{\Omega}) =  \Omega_j, i 
\neq j.$}
      \label{fig:multipatchigamap}
 \end{figure}
%---------------------------------
We denote by $F_{ij} = \partial \Omega_i \cap \partial \Omega_j, i \neq j,$ the 
interior facets of two patches see Fig.~\ref{fig:multipatchiga}. We assume the 
$F_{ij} \subset \partial \Omega_i$ for the interior facets. 
Let 
$F_i = \partial \Omega_i \cap \partial \Omega$ 
denote an edge of $\partial \Omega_i.$
$\mathcal{F} := \mathcal{F}_I \cup \mathcal{F}_D.$
%----------------------------------------
  \begin{figure}[th!]
  \centering
      \includegraphics[width=0.8\textwidth]{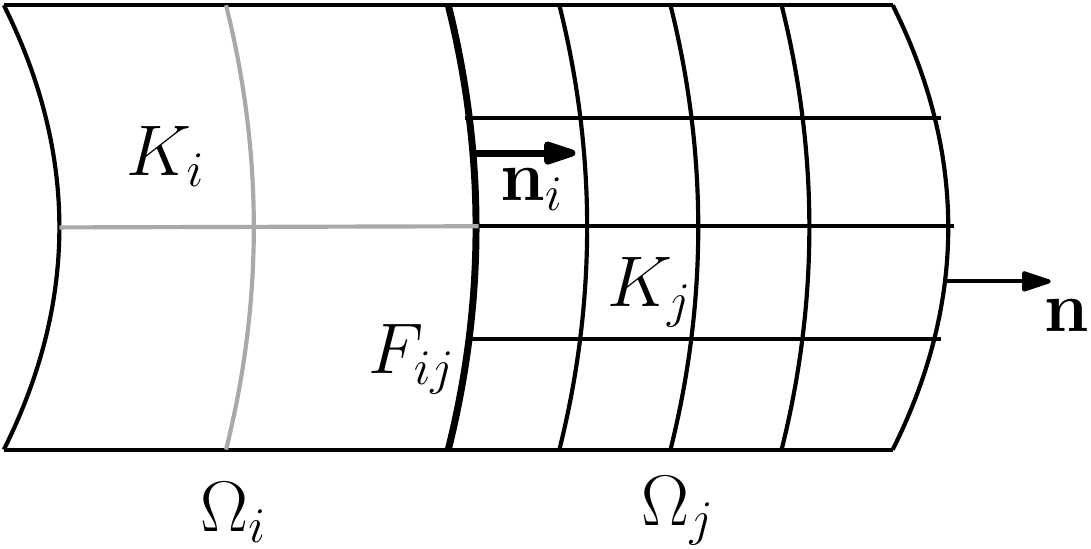}
      \caption{Illustration of the underlying non-matching mesh of the 
multi-patch isogeometric}
      \label{fig:multipatchiga}
 \end{figure}
%-------------
We assume that for each patch $\Omega_i, i=1,\ldots,N,$ the underlying mesh 
$\mathcal{K}_{h,i}$ is quasi-uniform i.e.  
    $h_K  \leq h_i \leq C_u h_K,$ for all $K \in \mathcal{K}_{h,i}, \quad  i = 
1,\ldots,N,$
where $C_u \geq 1$ and $h_i = \max\{h_K, K \in \mathcal{K}_{h,i}\}$ is the mesh 
size of $\Omega_i$ and $h_K$ is the diameter of of the mesh element $K$ see e.g. 
\cite{Moore:2017a}. 
% }
% %-------------------------------------------
% \section{Interior Penalty Variational Formulation}
% \label{sec:InteriorPenaltyeqn:variationalformulation}
% %-------------------------------------
%-----------------------------------------------------------------------
\section{Discontinuous Galerkin IGA Scheme formulation}
\label{sec:dGIGASchemeFormulation}
%---------------------------------------------------------------------
We recall some function spaces required for the derivation of interior penalty 
Galerkin schemes.
We assign to each patch $\Omega_i$ a a real number $s_i$ and collect them in the 
vector $\mathbf{s} = \{s_1,\ldots,s_N\}.$ Let us now define the broken Sobolev 
space
\begin{equation}
 \label{eqn:brokensobolevspace}
 H^{\mathbf{s}}(\Omega,\mathcal{T}_h) :=\{v \in L_{2}(\Omega): v|_{\Omega_i} \in 
H^{s_i}(\Omega_i), \; i = 1,\ldots,N\},
\end{equation}
and the corresponding broken Sobolev norm and semi-norm
  \begin{align}
  \label{eqn:brokensobolevnorm}
    \|v \|_{H^{\mathbf{s}}(\Omega,\mathcal{T}_h)} := \left( \sum_{i=1}^N \|v \|^2_{H^{s_i}(\Omega_i)}\right)^{1/2} \quad \text{and} \quad 
     |v |_{H^{\mathbf{s}}(\Omega,\mathcal{T}_h)} := \left( \sum_{i=1}^N   |v |^2_{H^{s_i}(\Omega_i)}\right)^{1/2},
  \end{align} 
  respectively.
%---------------------------------------------------------------

We denote the restrictions of the function $v$ on patches $\Omega_i$ and 
$\Omega_j,$ by 
$v_i$ and $v_j$ respectively. For the interior facets $F_{ij} \subset \partial 
\Omega_i,$ let $\mathbf{n}_i$ be the outward unit normal vector with respect to 
$\Omega_i,$ which coincides with the outward unit normal $\normal$ on $\partial 
\Omega,$ see Fig.~\ref{fig:multipatchiga}. We define the jump and average across 
the interior facets $F_{ij}$ of 
a smooth function $v \in H^1(\Omega, \mathcal{T}_h)$ by
\begin{align}
\label{eqn:jumpandaverage}
 \jump{  v} :=  v_i -   v_j , \quad and \quad  
 \{  v \} := \frac{1}{2}\left( v_i  +  v_j \right), 
F_{ij} \in \mathcal{F}_I,
 \end{align}
whereas the jump and average functions on the facets $F_i$ are given by
 $\jump{v} :=   v_i,$ and $\{ v \} :=  v_i. $
%-------------------------------------------------------------

Now, we present the dGIGA variational scheme as follows:
find $u \in V = H^{1+\mathbf{s}}(\Omega, \mathcal{T}_h)$ with $\mathbf{s} \in ( 
1/2,1],$ such that 
\begin{equation}
 \label{eqn:VariationalIdentity}
a_h(u,v) = \ell_h(v), \quad \forall v \in V,
\end{equation}
where the dG bilinear and linear forms considered throughout this paper
are defined by the relationships where the bilinear form is given as
\begin{equation}
\label{eqn:dgbilinearform}
 a_h(u,v)= \sum_{i=1}^{N} \big( a_i(u,v)+s_{i}(u,v)+p_{i}(u,v) \big),
\end{equation}
with
\begin{align*}
 a_i(u,v) &:= \int_{\Omega_i} 
		  \alpha_i \nabla_{\Omega} u \cdot \nabla_{\Omega} v + uv \,\, 
dx, \\
 s_{i}(u,v) &:=  \sum_{F_{ij} \subset \partial \Omega_i}
		  \int_{F_{ij}} 
		     \frac{\alpha_{ij}}{2}\bigg( \{\nabla_{\Omega} u\}\jump{v}
		     +  \{\nabla_{\Omega} v\}\jump{u}   \bigg)\,ds, \\
 p_{i}(u,v) &:=  \sum_{F_{ij} \subset \partial \Omega_i} \int_{F_{ij}}  
		      \frac{\delta \alpha_{ij}}{2h_{ij}} \jump{u}\jump{v} \,ds, 
\end{align*}
where $\delta$ is a non-zero positive real number. We have used a harmonic mean 
for the edges on the interface i.e. $h_{ij}=2h_ih_j/(h_i + h_j)$ with $h_{ij} 
\leq 2h_i$ and $h_{ij} \leq 2h_j$ and similarly for the diffusion coefficient 
i.e. 
$\alpha_{ij}=2\alpha_i \alpha_j/(\alpha_i + \alpha_j)$ with $\alpha_{ij} \leq 
2\alpha_i$ and $\alpha_{ij} \leq 2\alpha_j.$ The linear form is given by
\begin{align}
\label{eqn:dglinearform}
\ell_h(v)  &= \sum_{i=1}^N \bigg( \int_{\Omega_i} f v\, dx + \int_{F_i \in 
\mathcal{F}_N} g_N v \,ds\bigg),
\end{align}
where $\mathcal{F}_N$ is the collection of all edges on the Neumann Boundary 
parts.
\begin{remark}
 \label{rem:penaltyparameter}
  The choice of the penalty parameter $\delta$ depends on B-spline or NURBS 
degree $p$ and the dimension of the computational domain $\Omega \subset 
\mathbb{R}^d,$ for example in FEM $\delta=2(p+1)(p+d)/d,$ see e.g. 
\cite{Shahbazi:2005a}.
\end{remark}

%----------------------------------------------------------------------
\section{Analysis of the dGIGA Scheme}
\label{sec:AnalysisOfDiscretizationError}
%---------------------------------------------------------------------
For each subdomain $\Omega_i, i=1,\ldots,N,$ 
we will consider the discrete space $ \mathbb{V}_{h,i}, i=1,\ldots,N$ 
where $\mathbb{V}_{h}$ is given by \eqref{eqn:discretefunctionspace}. 
We define the discrete space corresponding to the domain $\Omega$ as 
\begin{equation*}
 V_h := \{v \in    \mathbb{V}_{h,i}, i=1,\ldots,N \},
\end{equation*}
which allows discontinuities across the patch interface.
The discrete dGIGA scheme then reads as: find $u_h \in V_h$ such that 
\begin{align}
\label{eqn:discretedgvariationalform}
 a_h(u_h,v_h) = \ell(v_h), \quad \forall v_h \in V_h.
\end{align}

The existence and uniqueness of the bilinear form $a_h(\cdot,\cdot)$ follow the 
popular Lax-Milgram theorem by showing the coercivity and boundedness. Next, we 
show that the bilinear form $a_h(\cdot,\cdot)$ is $V_{h}-$coercive with respect 
to the dG-norm 
\begin{align}
\label{eqn:dgdiscretenorm}
\|v \|_h^2 := \sum_{i=1}^{N} 
         \bigg( \alpha_i \|\nabla_{\Omega} v_i \|_{L_{2}(\Omega_i)}^2  
         + \| v \|_{L_{2}(\Omega_i)}^2
        & + \sum_{ F_{ij} \subset \partial \Omega_i}
	  \frac{\delta \alpha_{ij}}{2h_{ij}} \|\jump{v}\|_{L_2(F_{ij})}^2 \bigg).
\end{align}
% }
\begin{remark}
 \label{rem:dgdiscretenorm}
 The discrete norm $\| \cdot \|_h$ from \eqref{eqn:dgdiscretenorm} is a norm on 
$V_h.$
 Indeed, if $\|v \|_h = 0$ for some function $v \in V_{h}$, then 
$\nabla_{\Omega} v = 0$ in each subdomain $\Omega_i$. This means that the 
function $v$ is a constant on each patch $\Omega_i$,  $i = 1,\ldots,N$.  
Furthermore, $\|v \|_h = 0$ yields that $\jump{v} = 0$ across the internal 
facets $F_{ij} \subset \partial \Omega_i$ are zero, i.e., $v$ is constant in 
$\overline{\Omega}$.
Finally, $v_i = 0$ on the boundary $\partial \Omega$ implies that this constant 
must be zero. Thus, $v=0$ in $\overline{\Omega}$. The other norm axioms are 
obviously fulfilled.
\end{remark}

%-----------------------
To analyze the multi-patch interior penalty Galerkin scheme, the following 
discrete inverse and trace inequalities are required.  
%----------------------
\begin{lemma}
\label{lem:pinterfaceinverseinequalities}
Let $v \in V_h,$ then the following inverse inequalities hold;
\begin{equation}
\label{eqn:pinterfaceinverseinequalities}
  \| \nabla v \|_{L_2(\Omega_i)} \leq C_{inv,1,u} h^{-1}_i \| v 
\|_{L_{2}(\Omega_i)},
\end{equation}
and 
\begin{equation}
 \label{eqn:pinterfaceinverseinequalities2}
  \|  v \|_{L_2(\partial \Omega_i)} \leq C_{inv,0,u} h^{-1/2}_i \| v 
\|_{L_2(\Omega_i)},
\end{equation}
where $C_{inv,1,u}$ and $C_{inv,0,u}$ are positive constants, which are 
independent of $h_i$ and $\Omega_i$.
\end{lemma}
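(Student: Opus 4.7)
The plan is to transport both inequalities to the parameter domain $\widehat{\Omega}$ via the geometric mapping $\Phi_i$, apply the well-known inverse and discrete trace estimates for tensor-product B-spline / NURBS spaces on a quasi-uniform mesh in the reference parameter domain, and then push the result back to $\Omega_i$ using the bounded-distortion properties of the chart.

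First, since $\Phi_i$ is a smooth invertible parametrization of a compact patch, the first fundamental form $\widehat{F}$ and its determinant $\widehat{g}$ are uniformly bounded above and away from zero on $\widehat{\Omega}$. In particular there exist constants $0<c_\Phi\le C_\Phi$, independent of $h$, such that $c_\Phi\,\|\widehat{v}\|_{L_2(\widehat{\Omega})}\le \|v\|_{L_2(\Omega_i)}\le C_\Phi\,\|\widehat{v}\|_{L_2(\widehat{\Omega})}$, and, using $\nabla_\Omega v=\widehat{J}\widehat{F}^{-1}\nabla\widehat{v}\circ\Phi_i^{-1}$ from \eqref{eqn:tangentialgradient}, $\|\nabla_\Omega v\|_{L_2(\Omega_i)}\simeq\|\nabla\widehat{v}\|_{L_2(\widehat{\Omega})}$ with equivalence constants depending only on $\Phi_i$. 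Analogous equivalences hold between $\|v\|_{L_2(\partial\Omega_i)}$ and $\|\widehat{v}\|_{L_2(\partial\widehat{\Omega})}$ because the surface boundary measure is $\|\widehat{t}_1\times\widehat{t}_2\|\,d\widehat{s}$ restricted to the preimage of $\partial\Omega_i$, and this weight is bounded above and below.

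Second, in the parameter domain $\widehat{v}$ restricted to an element $\widehat{K}\in\widehat{\mathcal{K}}_{h,i}$ is a tensor-product polynomial of degree $\mathbf{p}$ (the NURBS weights are smooth and bounded, so they do not affect the inverse estimates up to constants, see \cite{BazilevsBeiraoCottrellHughesSangalli:2006a}). The classical inverse inequality on each quasi-uniform element gives
\begin{equation*}
\|\nabla \widehat{v}\|_{L_2(\widehat{K})}\le C\,h_{\widehat{K}}^{-1}\|\widehat{v}\|_{L_2(\widehat{K})},
\end{equation*}
and the discrete trace inequality gives $\|\widehat{v}\|_{L_2(\partial\widehat{K})}\le C\,h_{\widehat{K}}^{-1/2}\|\widehat{v}\|_{L_2(\widehat{K})}$, where $C$ depends only on $\mathbf{p}$ and the shape regularity. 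Summing over the elements of $\widehat{\mathcal{K}}_{h,i}$, together with the quasi-uniformity assumption $h_K\le h_i\le C_u h_K$, yields the two inequalities on the parameter domain with $h_i^{-1}$ and $h_i^{-1/2}$ on the right-hand side respectively.

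Third, combining the reference-domain estimates with the metric equivalences from the first step immediately produces \eqref{eqn:pinterfaceinverseinequalities} and \eqref{eqn:pinterfaceinverseinequalities2}, with constants $C_{inv,1,u}$ and $C_{inv,0,u}$ depending on $\mathbf{p}$, $C_u$ and on $\|\widehat{F}\|_\infty$, $\|\widehat{F}^{-1}\|_\infty$, $\|\widehat{J}\|_\infty$, but not on $h_i$. The step requiring the most care is controlling the NURBS weight factors and the surface-measure weight $\widehat{g}$ when transferring between $\widehat{K}$ and its image in $\Omega_i$; this is the place where one has to invoke the smoothness and non-degeneracy of $\Phi_i$ explicitly, rather than a routine FEM change-of-variables, because we are working on a surface in $\mathbb{R}^3$ rather than in a flat domain. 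Once these geometric constants are absorbed, nothing beyond the standard reference-element inverse estimates is needed.
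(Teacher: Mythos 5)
Your proof is correct and follows essentially the same route the paper itself relies on: the paper gives no argument for this lemma beyond citing the standard procedure of \cite{EvansHughes:2013a} and \cite{Moore:2017a}, which is precisely the pull-back to the parameter domain, elementwise polynomial inverse and trace estimates, and re-transport via the bounded, non-degenerate geometry map that you carry out. The only delicate points --- the NURBS weight, the surface measure $\widehat{g}$, and summing the elementwise trace bound over the boundary elements under quasi-uniformity --- are all ones you identify and handle.
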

%---------------------------------------
We conclude with the  continuous trace inequality,
%-----------------------
 \begin{lemma}
 \label{lem:patchtraceinequality}
 Let $\Omega_i = \Phi_i (\widehat{\Omega})$ for $i = 1,\dots,N$.
 Then the patch-wise scaled trace inequality 
    \begin{equation}
    \label{eqn:patchtraceinequality}
      \|v \|_{L_2(\partial \Omega_i)} \leq C_{t,u} h_i^{-1/2} 
             \bigg( \|v \|_{L_2(\Omega_i)} + 
h_i^{1/2+\epsilon}|v|_{H^{1/2+\epsilon}(\Omega_i)} \bigg),  
    \end{equation}
 holds for all $v \in H^{1/2+\epsilon}(\Omega_i), \epsilon \in (0,1/2],$ where 
$h_i$ denotes the maximum mesh size in the physical domain, and $C_{t,u}$ is a 
positive constant that only depends on the 
%local 
shape regularity of the mapping $\Phi_i.$ 
\end{lemma}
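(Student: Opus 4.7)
The plan is to reduce to a reference-element trace inequality via the geometrical mapping $\Phi_i$, establish a scaled element-wise inequality, and then sum over the mesh elements that touch $\partial\Omega_i$. The surface character of $\Omega_i$ enters only through bounded factors of $\widehat{g}$ coming from $\Phi_i$, which can be absorbed into the shape-regularity constant.

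First I would decompose the boundary into contributions from boundary-adjacent mesh elements, so that
\begin{equation*}
\|v\|_{L_2(\partial\Omega_i)}^2 \leq \sum_{K:\, \partial K \cap \partial\Omega_i \neq \emptyset} \|v\|_{L_2(\partial K)}^2.
\end{equation*}
Each such $K$ is the $\Phi_i$-image of a parameter-domain element which is in turn affinely equivalent to a fixed reference element $\widehat{K}$ via a map whose Jacobian scales like $h_K$. On $\widehat{K}$, the continuous embedding $H^{1/2+\epsilon}(\widehat{K}) \hookrightarrow L_2(\partial\widehat{K})$, valid for every $\epsilon\in(0,1/2]$, supplies the unscaled trace inequality. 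Pulling back through $\Phi_i$ and the affine rescaling yields
\begin{equation*}
\|v\|_{L_2(\partial K)} \leq C\, h_K^{-1/2}\bigl(\|v\|_{L_2(K)} + h_K^{1/2+\epsilon}\,|v|_{H^{1/2+\epsilon}(K)}\bigr),
\end{equation*}
with the constant depending only on the shape regularity of $\Phi_i$ and the fixed reference configuration.

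To conclude, I would square, sum over the boundary-adjacent elements, invoke quasi-uniformity ($h_K \leq h_i \leq C_u h_K$) to replace $h_K$ by $h_i$ everywhere, and use the subadditivity $\sum_K |v|_{H^{1/2+\epsilon}(K)}^2 \leq |v|_{H^{1/2+\epsilon}(\Omega_i)}^2$, which holds because omitting the cross-element contributions of the non-negative Slobodeckij integrand only reduces the right-hand side; the $L_2$ volume terms are handled similarly by monotonicity.

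The main obstacle I anticipate is the fractional scaling step. Unlike integer-order semi-norms, the factor $h_K^{1/2+\epsilon}$ does not come from a simple Jacobian applied to a derivative but from rescaling the singular Slobodeckij kernel $|x-y|^{-(2+2(1/2+\epsilon))}$ against the product measure $dx\,dy$ on the surface; tracking these exponents is what both forces the restriction $\epsilon\in(0,1/2]$ (to keep the embedding constant on $\widehat{K}$ finite and to stay within the range where $\Phi_i$ preserves fractional regularity) and produces the precise $h_i^{1/2+\epsilon}$ weight in the statement. Once this scaling is verified, the remainder is routine summation together with \lemref{lem:pinterfaceinverseinequalities}-style control of geometric constants.
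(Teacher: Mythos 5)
Your argument is correct and is exactly the ``standard procedure'' the paper invokes: the paper gives no proof of this lemma, only a pointer to \cite{EvansHughes:2013a} and \cite{Moore:2017a}, and those references establish the inequality precisely by your route --- pull-back of boundary-adjacent elements to a reference configuration, the trace embedding $H^{1/2+\epsilon}(\widehat{K})\hookrightarrow L_2(\partial\widehat{K})$, the Slobodeckij scaling that produces the $h_K^{1/2+\epsilon}$ weight, and summation under quasi-uniformity. The only caveat worth recording is that the reference-element trace constant, and hence $C_{t,u}$, also degenerates as $\epsilon\to 0$, a dependence the lemma's statement (not your proof) glosses over.
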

%------------------------------
The proofs of Lemma~\ref{lem:pinterfaceinverseinequalities} and 
Lemma~\ref{lem:patchtraceinequality} follows the standard procedure see e.g., 
\cite{EvansHughes:2013a} and \cite{Moore:2017a}. 
from the Finite Element.
%-------------------------------
\begin{lemma}
 \label{lem:consistencytermestimate}
For an arbitrary positive $\varepsilon$ and for $ F_{ij} \subset \partial 
\Omega_i$ the estimates
\begin{align}   
   \bigg| \int_{F_{ij} }& \frac{\alpha_{ij}}{2}\normal_i \cdot \nabla_{\Omega} 
v_{h,i} \jump{v_h} \,ds  \bigg|
    \leq\bigg( \alpha_i\varepsilon  \|\nabla_{\Omega} v_{h,i}  
\|_{L_2(\Omega_i)}^2   + \frac{C_{inv,0,u}^2 \alpha_{ij}}{2\varepsilon h_{ij} }
   \|\jump{v_h}\|_{L_2(F_{ij})}^2 \bigg), \label{eqn:consistencytermestimate}    
 \end{align}
holds for all $v_{h,i}, v_{h,j} \in V_{h}$, a positive constant $C_{inv,0,u}$ 
and $\alpha_i >0.$
\end{lemma}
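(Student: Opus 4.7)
The plan is to reduce the boundary integral to a product of two $L_2(F_{ij})$ norms via Cauchy--Schwarz, convert the normal derivative factor into an interior energy term by means of the discrete trace inequality from \lemref{lem:pinterfaceinverseinequalities}, and finally split the resulting product with a weighted Young inequality tuned so that the two prefactors on the right-hand side come out as $\varepsilon\alpha_i$ and $C_{inv,0,u}^2\alpha_{ij}/(2\varepsilon h_{ij})$.

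First I would apply Cauchy--Schwarz on $F_{ij}$, using that $\normal_i$ is a unit vector so $|\normal_i\cdot\nabla_{\Omega}v_{h,i}|\leq |\nabla_{\Omega}v_{h,i}|$. This yields
\[
\bigg|\int_{F_{ij}}\frac{\alpha_{ij}}{2}\normal_i\cdot\nabla_{\Omega}v_{h,i}\,\jump{v_h}\,ds\bigg|
\leq \frac{\alpha_{ij}}{2}\,\|\nabla_{\Omega}v_{h,i}\|_{L_2(F_{ij})}\,\|\jump{v_h}\|_{L_2(F_{ij})}.
\]
Since $F_{ij}\subset\partial\Omega_i$, the inverse trace inequality \eqref{eqn:pinterfaceinverseinequalities2} applied component-wise to $\nabla_\Omega v_{h,i}$ gives $\|\nabla_{\Omega}v_{h,i}\|_{L_2(F_{ij})}\leq C_{inv,0,u}\,h_i^{-1/2}\,\|\nabla_{\Omega}v_{h,i}\|_{L_2(\Omega_i)}$. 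Substituting this bound produces a product of an interior energy factor and a facet jump factor.

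Next I would insert the balancing weight $\sqrt{\alpha_i}$ and apply Young's inequality $ab\leq \varepsilon a^2 + \tfrac{1}{4\varepsilon}b^2$ with $a=\sqrt{\alpha_i}\,\|\nabla_{\Omega}v_{h,i}\|_{L_2(\Omega_i)}$. The first term becomes exactly $\varepsilon\alpha_i\|\nabla_{\Omega}v_{h,i}\|_{L_2(\Omega_i)}^2$, while the second is proportional to $\alpha_{ij}^2\,C_{inv,0,u}^2/(\varepsilon\,\alpha_i\,h_i)$ times the jump norm squared. To convert the latter into the target form involving $\alpha_{ij}/h_{ij}$ rather than $\alpha_{ij}^2/(\alpha_i h_i)$, I would invoke the harmonic-mean bounds $\alpha_{ij}\leq 2\alpha_i$ and $h_{ij}\leq 2h_i$ stated after \eqref{eqn:dgbilinearform}, which yield $\alpha_{ij}h_{ij}\leq 4\alpha_i h_i$ and hence absorb the excess constant.

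The only delicate point is the bookkeeping in the final step: one must verify that the numerical constants align so that the coefficient of $\|\jump{v_h}\|_{L_2(F_{ij})}^2$ is bounded by $C_{inv,0,u}^2\alpha_{ij}/(2\varepsilon h_{ij})$ and not some larger multiple. A direct calculation shows the harmonic-mean inequalities give a factor $1/2$ of slack, which is precisely enough. All other steps are standard DG manipulations, so no further obstacle is anticipated.
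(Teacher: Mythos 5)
Your proposal is correct and follows essentially the same route as the paper's own proof: Cauchy--Schwarz on $F_{ij}$, the discrete trace inequality \eqref{eqn:pinterfaceinverseinequalities2} to pull the gradient back to $\Omega_i$, a weighted Young inequality, and the harmonic-mean bounds $h_{ij}\leq 2h_i$, $\alpha_{ij}\leq 2\alpha_i$ to arrive at the stated coefficients. The only difference is cosmetic bookkeeping (where the $\sqrt{\alpha_i}$ weight is inserted and the order in which the harmonic-mean bounds are applied), and your constant tracking does close, so no gap remains.
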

\begin{proof}
Following the Cauchy Schwarz inequality, since $F_{ij} \subset \partial 
\Omega_i,$ by using the trace inequality \eqref{eqn:pinterfaceinverseinequalities2},  we 
have 
\begin{align*}
 \bigg| \int_{F_{ij} } \frac{\alpha_{ij}}{2}\normal_i \cdot \nabla_{\Omega} 
v_{h,i}\jump{v_h}  \,ds\bigg|  
  & \leq \frac{\alpha_{ij}}{2} ||  \nabla_{\Omega} v_{h,i}||_{L_2(F_{ij})} 
  ||\jump{v_h} ||_{L_2(F_{ij})} \\
  & \leq C_{inv,0,u}\frac{\alpha_{ij}}{2h_{i}^{1/2}} ||  \nabla_{\Omega} 
v_{h,i}||_{L_2(\Omega_i)} 
  ||\jump{v_h} ||_{L_2(F_{ij})}\\
  & \leq \frac{C_{inv,0,u}\alpha_{ij}}{h_{ij}^{1/2}} ||  \nabla_{\Omega} 
v_{h,i}||_{L_2(\Omega_i)} 
  ||\jump{v_h} ||_{L_2(F_{ij})}\\
  &\leq \bigg( \frac{\alpha_{ij} \varepsilon }{2} \|\nabla_{\Omega} v_{h,i}  
\|_{L_2(\Omega_i)}^2   
  + \frac{C_{inv,0,u}^2 \alpha_{ij}}{2\varepsilon h_{ij} }
   \|\jump{v_h}\|_{L_2(F_{ij})}^2 \bigg),
\end{align*}
where we use $h_{ij} \leq 2h_i$ and $\alpha_{ij} \leq 2\alpha_i,$ together with
the inequality $ab \leq \varepsilon a^2/ 2 + b^2/(2\varepsilon), \forall a,b \in 
\mathbb{R}$ with $\varepsilon > 0$ to complete the proof. \qed
\end{proof}
% %-----------------------------
Using the above result, we proceed to show the coercivity of the bilinear form 
$a_h(\cdot,\cdot).$
% %---------------------------
{\allowdisplaybreaks
\begin{lemma}[Coercivity]
\label{lem:dgbilinearcoercivity}
Let $a_h(\cdot,\cdot): V_h \times V_h \rightarrow \mathbb{R} $ be the bilinear 
form \eqref{eqn:dgbilinearform}. There exists  $\delta_0 > 0$ and $\mu_c > 0 $ 
such that $\delta > \delta_0$ and 
the discrete bilinear form $a_h(\cdot,\cdot)$ is $V_h-$coercive with respect to 
the norm $\|\cdot\|_h$, i.e.  
  \begin{equation}
  \label{eqn:dgbilinearcoercivity}
     a_h(v_h ,v_h) \geq \mu_c \|v_h \|^2_h, \quad \forall v_h \in V_{h},
  \end{equation}
where $\mu_c$ is independent of $\alpha_i, h_i$ and $N.$
%   }
\end{lemma}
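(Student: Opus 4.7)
The plan is to set $u=v=v_h$ and expand $a_h(v_h,v_h)$ termwise to identify which contributions already match the dG norm and which need to be controlled. The patch-volume pieces $a_i(v_h,v_h)$ immediately reproduce $\alpha_i\|\nabla_\Omega v_{h,i}\|_{L_2(\Omega_i)}^2 + \|v_h\|_{L_2(\Omega_i)}^2$, and the penalty pieces $p_i(v_h,v_h)$ reproduce the jump part $\sum_{F_{ij}\subset\partial\Omega_i}\frac{\delta\alpha_{ij}}{2h_{ij}}\|\jump{v_h}\|_{L_2(F_{ij})}^2$ of $\|v_h\|_h^2$. So the entire question reduces to absorbing the symmetric consistency contribution $\sum_i s_i(v_h,v_h)$, which has indefinite sign.

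Next I would rewrite $s_i(v_h,v_h)$ on a facet $F_{ij}$ by splitting the average $\{\nabla_\Omega v_h\}=\tfrac12(\nabla_\Omega v_{h,i}+\nabla_\Omega v_{h,j})$ so that each contribution involves the gradient of $v_h$ restricted to a single patch dotted against $\normal_i$ and multiplied by $\jump{v_h}$. Each of these pieces fits exactly the hypothesis of Lemma~\ref{lem:consistencytermestimate}, which, with a free parameter $\varepsilon>0$, gives
\begin{equation*}
|s_i(v_h,v_h)| \;\leq\; \sum_{F_{ij}\subset\partial\Omega_i}\Big(\alpha_i\varepsilon\,\|\nabla_\Omega v_{h,i}\|_{L_2(\Omega_i)}^2 + \tfrac{C_{inv,0,u}^2\alpha_{ij}}{2\varepsilon h_{ij}}\|\jump{v_h}\|_{L_2(F_{ij})}^2\Big),
\end{equation*}
together with the analogous contribution from the neighbour $\Omega_j$. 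Because each patch $\Omega_i$ has a bounded number of interface facets (a shape-regularity assumption on the multi-patch decomposition), the sum of the gradient terms over all facets incident to $\Omega_i$ is still bounded by $c\,\alpha_i\varepsilon\,\|\nabla_\Omega v_{h,i}\|_{L_2(\Omega_i)}^2$ for some constant $c$ depending only on the patch valence.

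Collecting all terms and summing over $i$, I obtain
\begin{equation*}
a_h(v_h,v_h)\;\geq\;\sum_{i=1}^N\Big((1-c\varepsilon)\alpha_i\|\nabla_\Omega v_{h,i}\|_{L_2(\Omega_i)}^2 + \|v_h\|_{L_2(\Omega_i)}^2 + \sum_{F_{ij}\subset\partial\Omega_i}\tfrac{\alpha_{ij}}{2h_{ij}}\big(\delta-\tfrac{2C_{inv,0,u}^2}{\varepsilon}\big)\|\jump{v_h}\|_{L_2(F_{ij})}^2\Big).
\end{equation*}
Choosing first $\varepsilon$ so small that $1-c\varepsilon>0$ and then fixing $\delta_0:=2C_{inv,0,u}^2/\varepsilon$, every $\delta>\delta_0$ makes both prefactors strictly positive and yields coercivity with $\mu_c:=\min\{1-c\varepsilon,\,1-\delta_0/\delta\}$, independent of $\alpha_i$, $h_i$, and $N$.

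The main obstacle, as usual for symmetric interior penalty schemes, is controlling the consistency term: one must split the average onto individual patches, invoke the scaled discrete trace inequality through Lemma~\ref{lem:consistencytermestimate}, and keep track of the harmonic averaging $\alpha_{ij}\leq 2\min(\alpha_i,\alpha_j)$ and $h_{ij}\leq 2\min(h_i,h_j)$ so that the gradient absorbents are dominated by the $\alpha_i$-weighted gradient term of the intrinsic dG norm and the jump absorbents are dominated by the $\delta$-weighted penalty. Only then can one determine the threshold $\delta_0$ explicitly in terms of the inverse-trace constant $C_{inv,0,u}$ and the patch valence, giving a $\delta_0$ that is independent of the jumps in $\alpha$ and of the non-matching mesh sizes.
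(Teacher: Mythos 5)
Your proof follows essentially the same route as the paper's: expand $a_h(v_h,v_h)$ as $\|v_h\|_h^2$ minus the symmetric consistency contribution, absorb that contribution via Lemma~\ref{lem:consistencytermestimate} with a Young parameter $\varepsilon$, and then fix $\varepsilon$ and the threshold $\delta_0$ so that both the gradient and penalty prefactors remain positive. Your explicit tracking of the patch-valence constant $c$ when summing the gradient absorbents over the facets of each patch is a point the paper passes over silently, but it does not alter the structure of the argument.
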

\begin{proof}
By using Cauchy-Schwarz's inequality, we proceed as follows
  \begin{align*}
   a_h(v_h,v_h) & = \|v_h \|_h^2  
	  - 2\sum_{i=1}^{N} \bigg( \sum_{F_{ij} \subset \partial \Omega_i} 
		  \int_{F_{ij} } \frac{\alpha_{ij}}{2} \{ \nabla_{\Omega}v_{h} \} \jump{v_h} \,ds  \bigg).  
  \end{align*}
	 Using Cauchy Schwarz's inequality and Lemma~\ref{lem:consistencytermestimate}, we have 
	 \begin{align*}
       a_h(v_h,v_h) \geq &  \left(1 - \frac{\varepsilon}{2} \right)\sum_{i=1}^{N}  \alpha_i \| \nabla_{\Omega} v_{h,i} \|_{L_2(\Omega_i)}^2 
% \notag \\
         +\left(\delta - \frac{ C^2_{inv,0,u}}{ \varepsilon} \right) 
\sum_{i=1}^{N} \sum_{F_{ij}  \subset \partial \Omega_i} 
\frac{\alpha_{ij}}{2h_{ij}} \| \jump{v_h} \|_{L_2(F_{ij})}^2.
   \end{align*}
   For example, for $\mu_c = 1/2,$ we choose $\varepsilon = 1$ and $\delta \geq 
C^2_{inv,0,u}.$
\qed 
\end{proof}
%%%%-----------------------
Next, we prove the uniform boundedness for the bilinear form $a_h(\cdot,\cdot)$ 
on $V_{h,*} \times V_{h},$ where  
 $V_{h,*}= V_0 \cap H^{1+\mathbf{s}}(\Omega,\mathcal{T}_h) + V_{h}$ with 
$\mathbf{s} > 1/2$ is equipped with the norm
 \begin{equation}
  \label{eqn:dgdiscretenorm*}
   \|v \|_{h,*} = \left( \|v \|_h ^2 + \sum_{i=1}^N \alpha _i h_i 
\|\nabla_{\Omega} v_i\|_{L_2(\partial \Omega_i)}^2 \right)^{1/2}.
 \end{equation}

%-----------------------------------
To prove \textit{a priori} estimates, we need to show the uniform boundedness of 
the bilinear form. We need the following two auxiliary lemmata to proof for the 
boundedness of the bilinear form.  
 %------------------------------------
 {\allowdisplaybreaks
\begin{lemma}
 \label{lem:consistencytermestimateboundedness}
 For a positive parameter $\delta$ and for $F_{ij} \subset \partial \Omega_i, 
i=1,\ldots,N$ and diffusion coefficients $\alpha_i$ and $\alpha_{ij},$ the 
estimates 
 \begin{align}
	\bigg|\int_{F_{ij} } \frac{\alpha_{ij}}{2} \normal_i \cdot 
\nabla_{\Omega} u_{i} \jump{v_h} \,ds\bigg|
	& \leq 
    \bigg( \frac{2\alpha_i h_i }{\delta} \|\nabla_{\Omega} u_i \|_{L_2(\partial 
\Omega_i)}^2 \bigg)^{1/2}  \bigg( \frac{\alpha_{ij}\delta}{2 h_{ij}}  
\|\jump{v_h}\|_{L_2(F_{ij})}^2 
\bigg)^{1/2},\label{eqn:consistencytermestimateboundedness} \\
    	\bigg| \int_{F_{ij}} \frac{\alpha_{ij}}{2} \normal_i \cdot 
\nabla_{\Omega} v_{h,i} \jump{u} \,ds \bigg|
		 & \leq 
    \bigg( \frac{2 C^2_{inv,0,u}\alpha_i }{ \delta}  \|\nabla_{\Omega} v_{h,i} 
\|_{L_2(\Omega_i)}^2 \bigg)^{1/2} \bigg(   \frac{\alpha_{ij}\delta}{ 2h_{ij}}
   \|\jump{u} \|_{L_2(F_{ij})}^2 
\bigg)^{1/2},\label{eqn:consistencytermestimateboundedness-2}
 \end{align}
 hold for all $u \in V_{h,*}$ and for all $v_h \in V_{h}.$
\end{lemma}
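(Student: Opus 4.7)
The plan is to prove both inequalities by the same two-step recipe: first apply Cauchy--Schwarz on the facet $F_{ij}$ to split the integrand into two $L_2(F_{ij})$ norms, and then redistribute the factors $\alpha_{ij}/(2h_{ij})^{1/2}$ versus $h_{ij}^{1/2}$ and $\alpha_{ij}^{1/2}$ so that one factor matches the penalty term $\|\jump{\cdot}\|_{L_2(F_{ij})}^2 \,\alpha_{ij}\delta/(2h_{ij})$ appearing in $\|\cdot\|_h$, and the other factor matches the companion term in $\|\cdot\|_{h,*}$. The harmonic-mean bounds $h_{ij}\le 2h_i$ and $\alpha_{ij}\le 2\alpha_i$ provided at the end of Section~\ref{sec:dGIGASchemeFormulation} are the glue that allows this rebalancing.

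For the first inequality, after Cauchy--Schwarz I would write
\[
\Big|\int_{F_{ij}} \tfrac{\alpha_{ij}}{2}\normal_i\cdot\nabla_\Omega u_i\,\jump{v_h}\,ds\Big|
\le \tfrac{\alpha_{ij}}{2}\,\|\nabla_\Omega u_i\|_{L_2(F_{ij})}\,\|\jump{v_h}\|_{L_2(F_{ij})},
\]
and then split the prefactor as $\tfrac{\alpha_{ij}}{2} = \bigl(\tfrac{\alpha_{ij} h_{ij}}{2\delta}\bigr)^{1/2}\bigl(\tfrac{\alpha_{ij}\delta}{2h_{ij}}\bigr)^{1/2}$. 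The bound $\alpha_{ij}h_{ij}\le 4\alpha_i h_i$ (a consequence of $\alpha_{ij}\le 2\alpha_i$ and $h_{ij}\le 2h_i$) lets me dominate the first factor by $\bigl(2\alpha_i h_i/\delta\bigr)^{1/2}$, and $F_{ij}\subset\partial\Omega_i$ gives $\|\nabla_\Omega u_i\|_{L_2(F_{ij})}\le \|\nabla_\Omega u_i\|_{L_2(\partial\Omega_i)}$; combining these yields the stated estimate.

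For the second inequality the opening step is identical, but now $v_{h,i}$ is a discrete function, so after bounding $\|\nabla_\Omega v_{h,i}\|_{L_2(F_{ij})} \le \|\nabla_\Omega v_{h,i}\|_{L_2(\partial \Omega_i)}$ I would invoke the inverse trace inequality \eqref{eqn:pinterfaceinverseinequalities2} from Lemma~\ref{lem:pinterfaceinverseinequalities} to obtain $\|\nabla_\Omega v_{h,i}\|_{L_2(\partial\Omega_i)}\le C_{inv,0,u}h_i^{-1/2}\|\nabla_\Omega v_{h,i}\|_{L_2(\Omega_i)}$. Splitting again as $\tfrac{\alpha_{ij}}{2}C_{inv,0,u}h_i^{-1/2} = \bigl(\tfrac{2C_{inv,0,u}^2\alpha_i}{\delta}\bigr)^{1/2}\bigl(\tfrac{\alpha_{ij}\delta}{2h_{ij}}\bigr)^{1/2}\cdot r_{ij}$, the residual factor $r_{ij}$ is controlled by $\alpha_{ij}h_{ij}/(4h_i)\le \alpha_i$, again a direct consequence of the harmonic-mean estimates, so $r_{ij}\le 1$ and the second inequality follows.

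The routine part is the Cauchy--Schwarz step; the only place where care is needed is checking the algebraic inequality hidden in the factor splitting, and verifying that the harmonic-mean bounds indeed absorb both an $\alpha$-ratio and an $h$-ratio simultaneously. I do not anticipate any subtler difficulty: surface-specific geometry enters only through the operators $\nabla_\Omega$ in the norms, which are treated exactly like their Euclidean counterparts once the trace/inverse estimates of Lemma~\ref{lem:pinterfaceinverseinequalities} are available.
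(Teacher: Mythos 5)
Your proof is correct and follows essentially the same route as the paper: Cauchy--Schwarz on the facet, the factor splitting $\tfrac{\alpha_{ij}}{2}=\bigl(\tfrac{\alpha_{ij}h_{ij}}{2\delta}\bigr)^{1/2}\bigl(\tfrac{\alpha_{ij}\delta}{2h_{ij}}\bigr)^{1/2}$, the harmonic-mean bounds $h_{ij}\le 2h_i$, $\alpha_{ij}\le 2\alpha_i$, and the discrete trace inequality \eqref{eqn:pinterfaceinverseinequalities2} for the second estimate. The residual-factor bookkeeping you carry out ($r_{ij}^2=\alpha_{ij}h_{ij}/(4\alpha_i h_i)\le 1$) is exactly the absorption step the paper performs implicitly.
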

\begin{proof}
 For $F_{ij} \subset \partial \Omega_i,$ using Cauchy-Schwarz's inequality, we 
obtain
 \begin{align*}
   \bigg|\int_{F_{ij} } \frac{\alpha_{ij}}{2} \normal_i \cdot \nabla_{\Omega} 
u_{i} \jump{v_h} \,ds\bigg| %\notag \\ 
    \leq 
         \bigg(\frac{\alpha_{ij} h_{ij}}{2\delta} 
		      \| \nabla_{\Omega} u_i  \|^2_{L_2(\partial \Omega_i)}  
\bigg)^{\frac{1}{2}} 
		      \bigg(\frac{\alpha_{ij} \delta}{2h_{ij}}
		      \| \jump{v_h} \|^2_{L_2(F_{ij})}\bigg)^{\frac{1}{2}}.
\end{align*}
We conclude the proof since $h_{ij} \leq 2 h_i$ and $\alpha_{ij} \leq 2 
\alpha_i$ For the second inequality, we apply the Cauchy Schwarz inequality to 
obtain
 \begin{align*}
   \bigg| \int_{F_{ij}} \frac{\alpha_{ij}}{2} \normal_i \cdot \nabla_{\Omega} 
v_{h,i} \jump{u} \,ds \bigg| %\notag \\ 
    \leq 
        \bigg(\frac{\alpha_{ij} h_{ij}}{2\delta} 
		      \| \nabla_{\Omega} v_{h,i}  \|^2_{L_2(\partial \Omega_i)} 
\bigg)^{\frac{1}{2}}  
		      \bigg(\frac{\alpha_{ij} \delta}{2h_{ij}}
		      \| \jump{u} \|^2_{L_2(F_{ij})}\bigg)^{\frac{1}{2}}.
\end{align*}
Since $h_{ij} \leq 2 h_i$ and $\alpha_{ij} \leq 2 \alpha_i,$ by applying the 
inequality \eqref{eqn:pinterfaceinverseinequalities2} for $v_{h,i} \in V_h,$ we 
complete the proof.
\qed 
\end{proof} 
}

Next, we proceed with the boundedness of the bilinear form $a_h(\cdot,\cdot)$ as 
follows ;
%---------------------------------------
{\allowdisplaybreaks
\begin{lemma}[Boundedness]
\label{lem:dgbilinearboundedness}
The discrete bilinear form $a_h(\cdot,\cdot)$ is uniformly bounded on
 $V_{h,*} \times V_{h},$ i.e. there exists a 
 mesh-independent  positive constant $\mu_b$ 
 such that 
  \begin{equation}
  \label{eqn:dgbilinearboundedness}
    | a_h(u,v_h) |\leq \mu_b \|u \|_{h,*} \|v_h \|_h, \forall u \in V_{h,*} , 
\forall v_h \in V_{h}.
  \end{equation}
\end{lemma}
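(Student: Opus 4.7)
The plan is to decompose $a_h(u,v_h) = \sum_{i=1}^N \bigl(a_i(u,v_h)+s_i(u,v_h)+p_i(u,v_h)\bigr)$ and bound each of the three pieces by products of contributions to $\|u\|_{h,*}$ and $\|v_h\|_h$, then assemble the final estimate via Cauchy–Schwarz on the outer sum over patches and interior facets.

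First, for the volume part $a_i(u,v_h)$, a direct application of the Cauchy–Schwarz inequality yields $|a_i(u,v_h)| \le \alpha_i^{1/2}\|\nabla_\Omega u\|_{L_2(\Omega_i)}\cdot\alpha_i^{1/2}\|\nabla_\Omega v_h\|_{L_2(\Omega_i)} + \|u\|_{L_2(\Omega_i)}\|v_h\|_{L_2(\Omega_i)}$; both of these products are controlled by the corresponding summands of $\|u\|_{h,*}$ and $\|v_h\|_h$. For the penalty part $p_i(u,v_h)$, splitting the weight $\delta\alpha_{ij}/(2h_{ij})$ symmetrically and applying Cauchy–Schwarz on each interior facet gives $|p_i(u,v_h)|$ bounded by a product of jump-norm contributions already present in the two discrete norms.

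The delicate piece is the consistency term $s_i(u,v_h)$, which we split into its two natural halves: $\{\nabla_\Omega u\}\jump{v_h}$ and $\{\nabla_\Omega v_h\}\jump{u}$. To the first half we apply the estimate \eqref{eqn:consistencytermestimateboundedness} of \lemref{lem:consistencytermestimateboundedness}, producing a factor $\bigl(2\alpha_i h_i/\delta\bigr)^{1/2}\|\nabla_\Omega u_i\|_{L_2(\partial\Omega_i)}$ — which is exactly the extra summand available in $\|u\|_{h,*}$ but not in $\|u\|_h$ — times a factor $\bigl(\alpha_{ij}\delta/(2h_{ij})\bigr)^{1/2}\|\jump{v_h}\|_{L_2(F_{ij})}$ belonging to $\|v_h\|_h$. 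To the second half we apply \eqref{eqn:consistencytermestimateboundedness-2}, using crucially that $v_h\in V_h$ so the discrete inverse trace inequality \eqref{eqn:pinterfaceinverseinequalities2} converts $\|\nabla_\Omega v_{h,i}\|_{L_2(\partial\Omega_i)}$ into $h_i^{-1/2}\|\nabla_\Omega v_{h,i}\|_{L_2(\Omega_i)}$, thereby recovering a term of the form $\alpha_i^{1/2}\|\nabla_\Omega v_{h,i}\|_{L_2(\Omega_i)}$ times the jump contribution $\bigl(\alpha_{ij}\delta/(2h_{ij})\bigr)^{1/2}\|\jump{u}\|_{L_2(F_{ij})}\le\|u\|_h\le\|u\|_{h,*}$. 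The two-sided average $\{\cdot\}$ is handled by bounding it by the one-sided traces from each of the two patches meeting at $F_{ij}$, so that the lemma, stated for a single patch contribution, can be applied on each side separately.

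Finally, summing over $i=1,\dots,N$ and over $F_{ij}\subset\partial\Omega_i$, a discrete Cauchy–Schwarz on these sums pulls out the full norms $\|u\|_{h,*}$ and $\|v_h\|_h$, yielding \eqref{eqn:dgbilinearboundedness} with a constant $\mu_b$ depending only on $\delta$ and $C_{inv,0,u}$ and independent of $h_i$, $\alpha_i$, and $N$. The main obstacle I anticipate is the asymmetric treatment of the two arguments: because $u$ need not be a piecewise polynomial we cannot use the inverse trace inequality on $\nabla_\Omega u$, which is precisely why the norm $\|\cdot\|_{h,*}$ is enriched with the boundary-gradient term; getting this dichotomy right — inverse trace for $v_h$, $\|\cdot\|_{h,*}$ control for $u$ — while keeping the weighting with $\alpha_{ij}$ versus $\alpha_i$ consistent (via the harmonic-mean bounds $\alpha_{ij}\le 2\alpha_i$ and $h_{ij}\le 2h_i$) is the key bookkeeping step.
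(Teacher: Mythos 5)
Your proposal follows essentially the same route as the paper: the same three-way decomposition into volume, consistency, and penalty terms, the same use of Lemma~\ref{lem:consistencytermestimateboundedness} (with its built-in inverse trace inequality \eqref{eqn:pinterfaceinverseinequalities2} applied only to the discrete argument $v_h$, and the boundary-gradient term of $\|\cdot\|_{h,*}$ absorbing $\nabla_\Omega u$ on $\partial\Omega_i$), and the same final assembly by discrete Cauchy--Schwarz over patches and facets. The argument is correct and your identification of the asymmetric treatment of the two arguments as the key point matches the paper's reasoning exactly.
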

\begin{proof}
The first term of the bilinear form \eqref{eqn:dgbilinearform} is estimated 
using Cauchy-Schwarz's inequality as follows
  \begin{align}
  \label{eqn:stabilityconsistencyterm-0}
   \left|\sum_{i=1}^N a_i(u,v_h) \right| & \leq \left( \sum_{i=1}^N \alpha_i \| 
\nabla_{\Omega} u \|^2_{L_2(\Omega_i)}\right)^{\frac{1}{2}}\left( \sum_{i=1}^N 
\alpha_i  \| \nabla_{\Omega} v_h\|^2_{L_2(\Omega_i)}  \right)^{\frac{1}{2}}. 
  \end{align}
   Using Cauchy Schwarz's inequality together with 
Lemma~\ref{lem:consistencytermestimateboundedness}, the second term yields
   \begin{align}
   \label{eqn:stabilityconsistencyterm}
      \bigg| \sum_{i=1}^N s_i(u,v_h) \bigg|  
      &\leq \bigg( \sum_{i=1}^N \sum_{ F_{ij}  \subset \partial  \Omega_i}
      \bigg[\frac{2\alpha_i h_i}{\delta}\|\nabla_\Omega u \|^2_{L_2(\partial 
\Omega_i)} 
      +  \frac{ \delta \alpha_{ij}}{2 h_{ij}}\|\jump{u}\|_{L_2(F_{ij})}\bigg] 
\bigg)^{\frac{1}{2}} \notag \\ 
     & \times \bigg( \sum_{i=1}^N \sum_{ F_{ij}  \subset \partial  \Omega_i}
      \frac{ 2C_{inv,0,u}^2 \alpha_i}{\delta}\|\nabla_\Omega v_h 
\|^2_{L_2(\Omega_i)} 
      +  \frac{\delta \alpha_{ij} }{2 h_{ij}}\|\jump{v_h}\|_{L_2(F_{ij})} 
\bigg)^{\frac{1}{2}}.
     \end{align}
Also, the last term of the bilinear form is estimated by applying 
Cauchy-Schwarz's inequality to obtain 
   \begin{align}
   \label{eqn:penaltytermestimate}
    \bigg|\sum_{i=1}^N p_i(u,v_h) \bigg| 
    & \leq \bigg(\sum_{i=1}^N \sum_{ F_{ij}  \subset \partial  \Omega_i} 
\frac{\delta \alpha_{ij} }{ 2 h_{ij} }\|\jump{u} \|^2_{L_2(F_{ij})} 
\bigg)^{\frac{1}{2}}
    \bigg(\sum_{i=1}^N \sum_{ F_{ij}  \subset \partial  \Omega_i} \frac{\delta 
\alpha_{ij} }{ 2 h_{ij} }\|\jump{v_h} \|^2_{L_2(F_{ij})} \bigg)^{\frac{1}{2}}.
   \end{align}
% {\color{magenta}
Combining all the terms \eqref{eqn:stabilityconsistencyterm-0} -- 
\eqref{eqn:penaltytermestimate}, we conclude the proof with the positive 
constant $\mu_b  = 2 \sqrt{\max\{ 1, (1 + C_{inv,0,1}^2/\delta) \} }.$ 
\qed 
\end{proof}
%-----------------------------------------

We note that the discrete norms $\|\cdot\|_h$ and $\|\cdot\|_{h,*}$ are 
uniformly equivalent on the dicrete space $V_h.$ 
%--------
In the next lemma, we present this equivalence of the discrete norms 
since the convergence analysis is considered in the discrete norm $\|\cdot 
\|_h.$

%-----------------------------------------
\begin{lemma}
 \label{lem:normequivalence}
 The norms $\|\cdot\|_h$ and $\|\cdot\|_{h,*}$ are uniformly equivalent on the 
discrete space $V_h$ such that 
 \begin{equation}
  \label{eqn:normequivalence}
   C_{e}\|v_h \|_{h,*} \leq \|v_h \|_h \leq \|v_h \|_{h,*}, \quad \forall v_h \in V_h,
 \end{equation}
 where $C_e$ is mesh independent.
\end{lemma}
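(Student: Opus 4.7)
The plan is to handle the two inequalities separately, since they have very different levels of difficulty. The upper bound $\|v_h\|_h \leq \|v_h\|_{h,*}$ is essentially free: unwinding the definition \eqref{eqn:dgdiscretenorm*}, one sees that $\|v_h\|_{h,*}^2$ is obtained from $\|v_h\|_h^2$ by adding the non-negative quantity $\sum_{i=1}^N \alpha_i h_i \|\nabla_\Omega v_{h,i}\|_{L_2(\partial \Omega_i)}^2$. So the right inequality in \eqref{eqn:normequivalence} follows by taking square roots, and no constant is needed (i.e.\ the upper constant is $1$).

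For the lower bound $C_e \|v_h\|_{h,*} \leq \|v_h\|_h$, the only piece of $\|v_h\|_{h,*}^2$ not already present in $\|v_h\|_h^2$ is the boundary term $\sum_i \alpha_i h_i \|\nabla_\Omega v_{h,i}\|_{L_2(\partial \Omega_i)}^2$. The plan is to control this term patchwise by the discrete inverse trace inequality \eqref{eqn:pinterfaceinverseinequalities2} from Lemma~\ref{lem:pinterfaceinverseinequalities}, applied componentwise to the vector-valued function $\nabla_\Omega v_{h,i}$, which is admissible since each component lives in $V_h$ on $\Omega_i$. This yields
\begin{equation*}
\alpha_i h_i \|\nabla_\Omega v_{h,i}\|_{L_2(\partial \Omega_i)}^2
\leq \alpha_i h_i \cdot C_{inv,0,u}^2 h_i^{-1} \|\nabla_\Omega v_{h,i}\|_{L_2(\Omega_i)}^2
= C_{inv,0,u}^2\, \alpha_i \|\nabla_\Omega v_{h,i}\|_{L_2(\Omega_i)}^2,
\end{equation*}
and the mesh-size cancellation is the key point that makes the estimate $h$-independent.

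Summing over $i=1,\ldots,N$ and recognising the right-hand side as part of $\|v_h\|_h^2$ (see \eqref{eqn:dgdiscretenorm}), I obtain
$\sum_i \alpha_i h_i \|\nabla_\Omega v_{h,i}\|_{L_2(\partial \Omega_i)}^2 \leq C_{inv,0,u}^2 \|v_h\|_h^2$. Adding $\|v_h\|_h^2$ to both sides gives $\|v_h\|_{h,*}^2 \leq (1 + C_{inv,0,u}^2)\|v_h\|_h^2$, so the claim holds with $C_e := (1 + C_{inv,0,u}^2)^{-1/2}$. Since $C_{inv,0,u}$ is independent of $h_i$ and $\Omega_i$ by Lemma~\ref{lem:pinterfaceinverseinequalities}, so is $C_e$.

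The step I expect to require the most care is making sure the inverse trace inequality is applied legitimately to $\nabla_\Omega v_{h,i}$ rather than to a scalar function in $V_h$; the safe route is to invoke \eqref{eqn:pinterfaceinverseinequalities2} component by component and then sum the squared norms. Beyond that, the argument is purely algebraic, and no further ingredients (neither the jump terms nor the $L_2$ terms in the norm) are touched.
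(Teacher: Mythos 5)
Your proof is correct and follows essentially the same route as the paper: the upper bound is immediate from the definition of $\|\cdot\|_{h,*}$, and the lower bound absorbs the extra boundary-gradient term via the discrete trace inequality \eqref{eqn:pinterfaceinverseinequalities2}, with the factors of $h_i$ cancelling exactly as you note. The only discrepancy is the explicit constant---you obtain $C_e=\left(1+C_{inv,0,u}^2\right)^{-1/2}$ whereas the paper states $C_e=\left(1+C_{inv,0,u}^{2}/\delta\right)^{-1}$---but both are mesh-independent, and your exponent $-1/2$ is the consistent one given that the norms are defined through their squares.
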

\begin{proof}
The proof of the upper bound follows immediately. However, the proof of the lower bound 
follows by using the definition of the norm 
\eqref{eqn:dgdiscretenorm*}
together with the trace inequality \eqref{eqn:pinterfaceinverseinequalities2}
with $C_e = \left(1 + C_{inv,0,1}^2/\delta \right)^{-1}.$
\qed
\end{proof}

A consequence of \lemref{lem:normequivalence} yields the boundedness of the 
bilinear form $a_h(\cdot,\cdot)$ with a positive constant $\tilde{\mu}_b$ as 
follows
\begin{align}
  \label{eqn:dgboundedness-2}
  | a_h(u_h,v_h)| \leq \tilde{\mu}_b \|u_h\|_h \|v_h\|_h, \quad \forall u_h, v_h 
\in V_h,
 \end{align}
 where $\tilde{\mu}_b = \mu_b\left(1 +C_{inv,0,1}^2/\delta \right)^{-1/2}.$
%-----------------------------------------
\section{Error Analysis of dGIGA Discretization}
\label{sec:erroranalysisOfdgigadiscretization}
%------------------------------------------------------------
Finally, we present the approximation estimates required to obtain \textit{a 
priori} 
error estimates. For patch $\Omega_i,i=1,\ldots,N,$  
let $\Pi_{h,i} : L_2(\Omega_i) \rightarrow \mathbb{V}_{h,i}$ 
denote a quasi-interpolant that yields optimal approximation results. 
Of course, such an interpolant is known to exist and has been well studied and 
presented in
\cite{BazilevsBeiraoCottrellHughesSangalli:2006a,DaVeigaBuffaSangalliVazquez:2014a} as follows
}
\begin{lemma}
\label{lem:localerrorestimate} 
Let $l$ and $s$ be integers with $0 \leq l \leq s \leq p+1$ 
 and $K \in \mathcal{K}_{h,i}$. Then there exist an interpolant $\Pi_{h,i} v \in 
\mathbb{V}_{h,i}$ 
 for all $ v \in H^{s}(\Omega_i)$ and a constant $C_s > 0$ such that the 
following inequality holds 
 \begin{align}
 \label{eqn:localerrorestimate}
       \sum_{K \in \mathcal{K}_{h,i}}|v - \Pi_{h,i}v|^{2}_{H^{l}(K)} 
      & \leq C_s  h_i^{2(s-l)} \|v \|^2_{H^{s}(\Omega_i)}, 
 \end{align} 
where $h_i$ is the mesh size in the physical domain, and $p$ denotes the 
underlying polynomial degree 
of the B-spline or NURBS.
\end{lemma}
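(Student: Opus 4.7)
The plan is to reduce the claim to the well-known approximation estimate in the parameter domain, where B-spline/NURBS theory supplies the bound directly, and then transfer it to the physical patch through the geometric mapping $\Phi_i$. Concretely, I would define the quasi-interpolant by pullback as $\Pi_{h,i}v := (\widehat{\Pi}_{h,i}\widehat{v})\circ \Phi_i^{-1}$ with $\widehat{v}=v\circ \Phi_i$, where $\widehat{\Pi}_{h,i}$ is the parametric quasi-interpolant constructed in \cite{BazilevsBeiraoCottrellHughesSangalli:2006a,DaVeigaBuffaSangalliVazquez:2014a}. Because $B_{\mathbf{i},\mathbf{p}}=\widehat{B}_{\mathbf{i},\mathbf{p}}\circ \Phi_i^{-1}$, this produces a function in $\mathbb{V}_{h,i}$ as required.

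First, I would invoke the parametric estimate element by element. On the corresponding mesh $\widehat{\mathcal{K}}_{h,i}$ in the parameter domain, the cited references yield a constant $\widehat{C}_s>0$ such that
\begin{equation*}
\sum_{\widehat{K}\in \widehat{\mathcal{K}}_{h,i}} |\widehat{v}-\widehat{\Pi}_{h,i}\widehat{v}|_{H^l(\widehat{K})}^2 \leq \widehat{C}_s\, h_i^{2(s-l)}\|\widehat{v}\|_{H^s(\widehat{\Omega})}^2,
\end{equation*}
for $0\leq l\leq s\leq p+1$, which is the parametric analogue of \eqref{eqn:localerrorestimate}. The next step is to push this inequality to the physical patch. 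Using the chain rule together with the relations \eqref{eqn:tangentialgradient}--\eqref{eqn:differentialoperatormanifold}, every surface derivative $D^\kappa_\Omega v$ of order $\leq s$ can be expressed as a linear combination of parametric derivatives $D^{\kappa'}\widehat{v}$ whose coefficients are polynomial expressions in $\widehat{J}$, $\widehat{F}^{-1}$ and their derivatives up to order $s-1$. Since $\Phi_i$ is smooth and shape-regular, these coefficients, together with the area element $\widehat{g}$, are uniformly bounded above and below and are independent of $h_i$. This gives the two-sided norm equivalence
\begin{equation*}
C^{-1}\|\widehat{w}\|_{H^m(\widehat{K})} \leq \|w\|_{H^m(K)} \leq C\|\widehat{w}\|_{H^m(\widehat{K})},\qquad 0\leq m\leq s,
\end{equation*}
with $C=C(\Phi_i)$, where $K=\Phi_i(\widehat{K})$.

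Applying the norm equivalence to $w=v-\Pi_{h,i}v$ on the left at order $l$, and to $w=v$ on the right at order $s$, transforms the parametric estimate into the physical estimate of the lemma, with $C_s$ absorbing $\widehat{C}_s$ and a finite power of $C(\Phi_i)$. The main obstacle is precisely this norm-equivalence step for high-order surface seminorms: expanding $D^\kappa_\Omega$ to order $s$ via the chain rule produces increasingly complicated products of derivatives of $\widehat{J}$ and $\widehat{F}^{-1}$, and one must argue that all such terms remain controlled by constants depending only on the shape regularity of $\Phi_i$, using that $\widehat{F}$ is uniformly positive definite (so the tangent vectors are linearly independent as noted after \eqref{eqn:unitnormalvector}). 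Once this surface-specific technicality is handled, the rest of the proof is a routine translation of the planar isogeometric argument.
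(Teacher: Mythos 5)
Your pullback-and-norm-equivalence argument is exactly the standard proof of this estimate in the cited approximation-theory literature; the paper itself offers no proof of this lemma and simply quotes it from \cite{BazilevsBeiraoCottrellHughesSangalli:2006a,DaVeigaBuffaSangalliVazquez:2014a}, so your sketch is consistent with the paper's treatment and is essentially correct. The one technicality you rightly flag — uniform control of the high-order chain-rule coefficients in $\widehat{J}$, $\widehat{F}^{-1}$ and their derivatives — is precisely what the references handle via the fixed, $h$-independent, shape-regular patch parametrization, matching the dependence on $\nabla_{\Omega}\mathrm{\Phi}$ stated in Proposition~\ref{prop:globalerrorestimate}.
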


For patch $\Omega_i,i=1,\ldots,N,$ the local estimate 
\eqref{eqn:localerrorestimate} yields a global estimate if the multiplicity of 
the inner knots is not larger than $p + 1 - l$ and $\Pi_{h,i} v \in 
\mathbb{V}_{h,i} \cap H^{l}(\Omega_i).$ 
\begin{proposition}
\label{prop:globalerrorestimate}
Let $v \in H^{s}(\Omega_i)$ be a function defined in the physical domain 
$\Omega_i.$ 
Given an integer $l$ such that $0 \leq l \leq p+1, l \leq s,$ and $p \leq s+1,$ 
where $s$ is the smoothness of the considered B-Spline basis. Then there exists 
a projection operator $\Pi_{h,i} : L_2(\Omega_i) \rightarrow \mathbb{V}_{h,i}$ 
such that the approximation error estimates
 \begin{equation}
 \label{eqn:globalerrorestimate}
   |v - \Pi_{h,i}v|_{H^{l}(\Omega_i)}  
   \leq C_s h_i^{(\beta-l)} \|v \|_{H^{s_i}(\Omega_i)}, 
 \end{equation} 
where $\beta = \min\{p+1,l\},$ $h_i$ denotes the maximum mesh-size parameter in 
the physical domain and the generic constant $C_s$ only depends on $l,s$ and 
$p$, the shape regularity of the physical domain $\Omega_i$ described by the mapping 
$\Phi$ and, in particular, $\nabla_{\Omega} \mathrm{\Phi}.$
\end{proposition}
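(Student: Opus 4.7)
The plan is to obtain the global estimate by summing the elementwise local bound from \lemref{lem:localerrorestimate} and then using the smoothness of the quasi-interpolant across element interfaces to collapse the sum into the global semi-norm on the entire patch $\Omega_i$.

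First, I would verify that under the stated multiplicity assumption (the multiplicity of inner knots is at most $p+1-l$), the image of the quasi-interpolant lies in $\mathbb{V}_{h,i}\cap H^{l}(\Omega_i)$. This is the standard $C^{p-m}$ continuity property of B-splines across a knot with multiplicity $m$ already recorded after equation \eqref{eqn:bsplinebasisfunction}: requiring $m\le p+1-l$ guarantees $C^{l-1}$ continuity in the parameter domain, and composition with the smooth, invertible geometrical mapping $\Phi_i$ transfers this regularity to the physical patch. Hence $v-\Pi_{h,i}v\in H^{l}(\Omega_i)$ globally and no jump terms across element boundaries appear in the $H^{l}$ semi-norm.

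Second, once the global $H^{l}$ regularity is in hand, the semi-norm is additive over the mesh elements, so
\begin{equation*}
|v-\Pi_{h,i}v|_{H^{l}(\Omega_i)}^{2}
=\sum_{K\in\mathcal{K}_{h,i}}|v-\Pi_{h,i}v|_{H^{l}(K)}^{2}
\le C_s\, h_i^{2(s-l)}\,\|v\|_{H^{s}(\Omega_i)}^{2},
\end{equation*}
by a direct invocation of \eqref{eqn:localerrorestimate}. Taking square roots yields the stated inequality. The rate $\beta-l$ with $\beta=\min\{p+1,s\}$ appears because the elementwise estimate degrades to $h_i^{p+1-l}$ once $s$ exceeds the polynomial degree $p+1$ of the underlying spline, so one truncates $s$ at $p+1$ when applying the local bound; this is the reason for the min in the definition of $\beta$.

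The dependence of $C_s$ on the mapping, and in particular on $\nabla_\Omega\Phi_i$, enters when one pulls an element $K=\Phi_i(\widehat K)$ back to the parameter domain to apply the Bazilevs--Beir\~ao--Cottrell--Hughes--Sangalli quasi-interpolation estimate stated in \cite{BazilevsBeiraoCottrellHughesSangalli:2006a}: derivatives transform via \eqref{eqn:tangentialgradient} and the chain rule, producing factors bounded by powers of $\|\widehat J\|$, $\|\widehat F^{-1}\|$ and the determinant $\widehat g$. I expect the main technical nuisance, though not a deep obstacle, to be bookkeeping these mapping-dependent constants and verifying that the shape-regularity of $\Phi_i$ makes them uniform in $h_i$; the continuity argument in step one is conceptually the most delicate point but is entirely standard once the multiplicity hypothesis is in place.
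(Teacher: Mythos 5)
Your argument is correct and is essentially the intended one: the paper itself offers no proof beyond the citation to Ded\`e--Quarteroni, but the paragraph preceding \propref{prop:globalerrorestimate} explicitly describes exactly your route --- the multiplicity condition $m\le p+1-l$ forces $\Pi_{h,i}v\in\mathbb{V}_{h,i}\cap H^{l}(\Omega_i)$, after which the elementwise bound of \lemref{lem:localerrorestimate} sums to the global semi-norm estimate. Two small points are worth recording. First, you silently replaced the paper's $\beta=\min\{p+1,l\}$ with $\beta=\min\{p+1,s\}$; your version is the correct one (with $\beta=\min\{p+1,l\}$ the exponent $\beta-l$ would be $\le 0$ and the estimate would be vacuous), so this is a typo in the statement rather than a flaw in your proof, but you should say explicitly that you are correcting it. Second, when you ``truncate $s$ at $p+1$'' you are applying \lemref{lem:localerrorestimate} to $v$ viewed as an element of $H^{\min\{s,p+1\}}(\Omega_i)$, which is legitimate by the Sobolev embedding $H^{s}\subset H^{\min\{s,p+1\}}$ but costs you the norm $\|v\|_{H^{\min\{s,p+1\}}(\Omega_i)}\le\|v\|_{H^{s}(\Omega_i)}$ on the right-hand side; stating that one line makes the appearance of $\beta$ airtight. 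With those clarifications the proof is complete and consistent with the cited result.
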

\begin{proof}
See \cite[Proposition~3.2]{DedeQuarteroni:2015a}.
\end{proof}
%-----------------------------------------------------
For the error analysis, we assume that the patches have the same regularity 
such that $\mathbf{s}=\{s_1,s_2,\ldots,s_N\}=s$ and $H^{1+s}(\Omega_i), i=1,\ldots,N.$
%-----------------------------------------------------
\begin{lemma}
\label{lem:interfaceapproximationerror}
Let $v \in  V_0 \cap  H^{1+s}(\Omega_i)$ with $s >1/2$ and $p \ge 1.$ 
By assuming quasi-uniform meshes, then there exists a projection 
$\Pi_{h}v \in V_{h}$ and generic positive constants $C_0$ and $C_1$ 
such that the following error estimates hold
  \begin{align}
      \sum_{i=1}^{N} \sum_{F_{ij} \subset \partial \Omega_i} \frac{\delta 
\alpha_{ij}}{2 h_{ij}} \|\jump{ v - \Pi_{h}v}  \|_{L_{2}(F_{ij})}^2 
    &  \leq C_0 \sum_{i=1}^{N} \left(  h_i^{2r} + \sum_{F_{ij} \subset \partial 
\Omega_i}   \frac{h_j}{h_i}h_j^{2r}\right) \alpha_i\| v\|^2_{H^{1+r}(\Omega_i)}, 
 \label{eqn:interfaceapproximationerror1} \\
  \sum_{i=1}^N \alpha_i h_i  \|  \nabla_{\Omega} (v - \Pi_{h}v)  
\|^2_{L_{2}(\partial \Omega_i)}  
   &  \leq C_1  \sum_{i=1}^{N} \alpha_i h_i^{2r-1}  \| 
v\|^2_{H^{1+r}(\Omega_i)}, \label{eqn:interfaceapproximationerror2} 
  \end{align}
where $r =\min\{s,p\},$ the constant $C_0$ and $C_1$ $\Omega_i,$ 
are independent of $h_i$ and $h_j$ and $F_{ij} \subset \partial \Omega_i$ 
are the interior facets. 
\end{lemma}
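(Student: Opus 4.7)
The plan is to lift both interface $L^2$-norms back to patch interiors by means of the scaled trace inequality \eqref{eqn:patchtraceinequality}, and then invoke the global approximation estimate \eqref{eqn:globalerrorestimate} on each patch. The harmonic-mean identities $h_{ij}^{-1} = \frac{1}{2h_i}+\frac{1}{2h_j}$ and $\alpha_{ij}\le 2\min(\alpha_i,\alpha_j)$ will be used repeatedly to redistribute the interfacial weights onto the adjacent patch where each term is to be estimated.

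For the first estimate, I would begin by splitting the jump on each interior facet $F_{ij}\subset\partial\Omega_i$ as
\[
\jump{v-\Pi_h v}\big|_{F_{ij}} = (v-\Pi_{h,i}v)\big|_{\Omega_i} - (v-\Pi_{h,j}v)\big|_{\Omega_j},
\]
and apply the triangle inequality. The $i$-side contribution is estimated by \eqref{eqn:patchtraceinequality} with a small $\epsilon\in(0,1/2]$, giving
\[
\|v-\Pi_{h,i}v\|_{L_2(F_{ij})}^2 \le C_{t,u}^2\, h_i^{-1}\Bigl(\|v-\Pi_{h,i}v\|_{L_2(\Omega_i)}^2 + h_i^{1+2\epsilon}\,|v-\Pi_{h,i}v|_{H^{1/2+\epsilon}(\Omega_i)}^2\Bigr),
\]
into which I plug \eqref{eqn:globalerrorestimate} with $l=0$ and $l=1/2+\epsilon$ to extract the power $h_i^{2(1+r)}$. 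Combined with the prefactor $\alpha_{ij}/h_{ij}\le 2\alpha_i/h_i$, this produces the diagonal term $\alpha_i h_i^{2r}\|v\|_{H^{1+r}(\Omega_i)}^2$ upon summation over $F_{ij}\subset\partial\Omega_i$. The $j$-side contribution is treated analogously but using the trace inequality on $\Omega_j$, yielding $h_j^{-1}h_j^{2(1+r)}\|v\|_{H^{1+r}(\Omega_j)}^2$; bounding the weight instead via $\alpha_{ij}/h_{ij}\le 2\alpha_i/h_j$ and the cross-ratio $h_j/h_i$ that arises from the non-matching meshes produces the claimed anisotropic term $\alpha_i(h_j/h_i)\,h_j^{2r}\|v\|^2_{H^{1+r}(\Omega_i)}$ after re-indexing the facet sum (using the symmetry that $F_{ij}$ also appears in $\partial\Omega_j$'s sum).

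For the second estimate, the approach is more direct: trace inequality \eqref{eqn:patchtraceinequality} applied to $\nabla_\Omega(v-\Pi_{h,i}v)$ on $\partial\Omega_i$ gives
\[
\|\nabla_\Omega(v-\Pi_{h,i}v)\|_{L_2(\partial\Omega_i)}^2 \le C_{t,u}^2\, h_i^{-1}\Bigl(\|\nabla_\Omega(v-\Pi_{h,i}v)\|_{L_2(\Omega_i)}^2 + h_i^{1+2\epsilon}\,|\nabla_\Omega(v-\Pi_{h,i}v)|_{H^{1/2+\epsilon}(\Omega_i)}^2\Bigr),
\]
and Proposition \ref{prop:globalerrorestimate} with $l=1$ and $l=3/2+\epsilon$ delivers $h_i^{2r}$ and $h_i^{2r-1-2\epsilon}$ respectively. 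Multiplying through by $\alpha_i h_i$, the $h_i^{-1}$ from the trace inequality is absorbed and the leading-order contribution is the required $\alpha_i h_i^{2r-1}\|v\|^2_{H^{1+r}(\Omega_i)}$; the $h_i^{2\epsilon}$ lower-order piece is trivially bounded by the same term.

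The main obstacle is the non-matching interface term in estimate 1: a function defined on $\Omega_j$ must be traced onto a facet that the indexing treats as living in $\partial\Omega_i$, and the approximation error with respect to the $\Omega_j$-mesh must then be expressed using $h_j$ while being summed in the $\partial\Omega_i$-sum. The careful bookkeeping through the harmonic mean $h_{ij}$ (which always favors the smaller of $h_i,h_j$) is precisely what yields the cross-patch ratio $h_j/h_i$, and verifying that no loss of power in $h$ occurs during this redistribution is the most delicate step.
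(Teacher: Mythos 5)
Your proposal follows essentially the same route as the paper's own proof: split the jump by the triangle inequality, estimate each side with the scaled trace inequality of Lemma~\ref{lem:patchtraceinequality} on its own patch, insert the approximation estimates of Proposition~\ref{prop:globalerrorestimate} (at orders $0$ and $1/2+\epsilon$ for the first bound, $1$ and $3/2+\epsilon$ for the second), and redistribute the $\alpha_{ij}/h_{ij}$ weights via the harmonic-mean identities to produce the diagonal $h_i^{2r}$ term and the cross term $\frac{h_j}{h_i}h_j^{2r}$ after re-indexing the facet sums. Your explicit attention to the $j$-side bookkeeping is, if anything, more careful than the paper's (whose displayed exponents contain typos), so the proposal is correct and matches the paper's argument.
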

%----------------------------------
\begin{proof}
  By using \eqref{eqn:jumpandaverage} with \lemref{lem:patchtraceinequality} and 
 \propref{prop:globalerrorestimate}, we estimate the first term as follows
% ----------------
\begin{align}
\label{eqn:interfacejumpbound2-2}
  \frac{\delta \alpha_{ij}}{2 h_{ij}} \| v-\Pi_{h,i}v  \|^2_{L_{2}(F_{ij})} 
  &\leq 
 \frac{\delta \alpha_{ij}}{2 h_{ij}}
 C_{t,u}^2\bigg( h_i^{-1}  \| v-\Pi_{h,i}v  \|_{L_{2}(\Omega_i)}^2 + 
h_i^{2\epsilon +1}  | v-\Pi_{h,i}v  |_{H^{1/2+\epsilon}(\Omega_i)}^2 \bigg) 
\notag \\
 & \leq 2 \delta C_{t,u}^2C_s \frac{\alpha_i}{2 h_{ij}} \bigg( h_i^{-1} 
h_i^{2(1+r)-1/2+\epsilon} + h_i^{2\epsilon +1} h_i^{2r} \bigg)\| v 
\|^2_{H^{1+r}(\Omega_i)} \notag \\
  & \leq 4 \delta C_{t,u}^2 C_s  \frac{\alpha_i}{2 h_{ij}}h_i^{2r-1}\| v 
\|^2_{H^{1+r}(\Omega_i)}. 
\end{align}
Similarly, the second term  
% ----------------
\begin{align}
\label{eqn:interfacejumpbound2-3}
 \frac{\delta \alpha_{ij}}{2 h_{ij}} \| v-\Pi_{h,j}v  \|^2_{L_{2}(F_{ij})} 
 & \leq 4 \delta C_{t,u}^2 C_s  \frac{\alpha_i}{2 h_{ij}}h_j^{2r-1}\| v 
\|^2_{H^{1+r}(\Omega_j)}. 
\end{align}
%---------------------------------------------------
Now, we complete the proof by summing with respect to the interior facets 
$F_{ij} \subset \partial \Omega_i$ and $i = 1,2,\ldots,N,$ to obtain 
\begin{align}
\label{eqn:interfacejumpbound2}
  \sum_{i=1}^{N}  \sum_{F_{ij} \subset \partial \Omega_i}
  \frac{\delta \alpha_{ij}}{2 h_{ij}}\| \jump{ v - \Pi_{h}v } 
\|_{L_{2}(F_{ij})}^2 
 & \leq C_0 \sum_{i=1}^{N} \alpha_i \left(  h_i^{2r} +   \sum_{F_{ij} \subset 
\partial \Omega_i} \frac{h_j}{h_i}h_j^{2r}\right)\| v\|^2_{H^{1+r}(\Omega_i)}.
\end{align}
where $C_0 = \delta C_{t,u}^2 C_s.$
%--------------------------------------------------
The proof of \eqref{eqn:interfaceapproximationerror2} follows by using 
Lemma~\ref{lem:patchtraceinequality} and the approximation estimate of 
Proposition~\ref{prop:globalerrorestimate} as follows
\begin{align}
  \sum_{i=1}^N \alpha_i h_i  \|  \nabla_{\Omega} (v - \Pi_{h,i}v)  
\|^2_{L_{2}(\partial \Omega_i)} 
 &\leq  \sum_{i=1}^N\alpha_i C_{t,u}^2 C_s \left(  h_{i}^{2r} + h_{i}^{2\epsilon 
+1}  h_{i}^{2(1+r-3/2-\epsilon)} \right)\| v \|^2_{H^{1+r}(\Omega_i)} \notag \\
 & \leq 2C_s C_{t,u}^2\sum_{i=1}^N \alpha_i h_{i}^{2r} \| 
v\|_{H^{1+r}(\Omega_i)}^2,
\end{align}
where $C_1 = 2C_s C_{t,u}^2.$
\qed 
\end{proof}
%------------------------------------------------

To derive the \textit{a priori} error estimate, we show that the interpolant yields the 
optimal approximation estimate in the discrete norms. 
%------------------------------------------------
\begin{lemma}
\label{lem:dginterpolationerrorestimate}
Let $v \in  V_0 \cap  H^{1+s}(\Omega_i)$ 
with $s > 1/2$ and $p\ge 1.$
Then there exists a projection $\Pi_{h}v \in V_{h}$ and generic positive 
constants $C_2$ and $C_3$ such that
 \begin{align}
     \|v- \Pi_{h}v \|^2_h 
   & \leq C_2 \sum_{i=1}^{N}  \left(  h_i^{2r} + \sum_{F_{ij}  \subset \partial 
\Omega_i} \frac{h_j}{h_i} h_j^{2r}\right)\alpha_i\| v\|^2_{H^{1+r}(\Omega_i)},  
\label{eqn:dginterpolationerrorestimate} \\
        \|v- \Pi_{h}v \|^2_{h,*} 
   & \leq C_3 \sum_{i=1}^{N}  \left( h_i^{2r} + \sum_{F_{ij} \subset \partial 
\Omega_i} \frac{h_j}{h_i} h_j^{2r} \right)\alpha_i \| v\|^2_{H^{1+r}(\Omega_i)}, 
 \label{eqn:dginterpolationerrorestimate*} 
 \end{align}
 where $F_{ij} \subset \partial \Omega_i$ are the interior facets, of $\Omega_i,$ 
 $r =\min\{s,p\}$ and $C_2$ and $C_3$ only depend on $s$ and $p.$
\end{lemma}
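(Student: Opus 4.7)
The plan is to obtain both bounds by decomposing the discrete norms according to their definitions and applying the patch-interior estimate \propref{prop:globalerrorestimate} together with the interface estimates of \lemref{lem:interfaceapproximationerror}. No new analytic ingredient is needed; the work is bookkeeping.

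For \eqref{eqn:dginterpolationerrorestimate}, I would start from the definition \eqref{eqn:dgdiscretenorm} of $\|\cdot\|_h$ and split $\|v - \Pi_{h}v\|_h^2$ into three groups of patch contributions: the weighted bulk gradient terms $\alpha_i \|\nabla_{\Omega}(v - \Pi_{h,i}v)\|_{L_2(\Omega_i)}^2$, the bulk mass terms $\|v - \Pi_{h,i}v\|_{L_2(\Omega_i)}^2$, and the interface jump terms weighted by $\delta\alpha_{ij}/(2h_{ij})$. The first two are patch-interior quantities and are controlled by \propref{prop:globalerrorestimate} with $l=1$ and $l=0$ respectively, each producing a bound of the form $h_i^{2r}\|v\|_{H^{1+r}(\Omega_i)}^2$ (the mass term is actually sharper, of order $h_i^{2(r+1)}$, and is absorbed into the gradient contribution). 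The interface jump sum is precisely the left-hand side of \eqref{eqn:interfaceapproximationerror1} and is already estimated by \lemref{lem:interfaceapproximationerror}. Summing the three contributions over $i$ and setting $C_2$ to be the maximum of the resulting constants yields \eqref{eqn:dginterpolationerrorestimate}.

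For \eqref{eqn:dginterpolationerrorestimate*}, I observe from \eqref{eqn:dgdiscretenorm*} that
\[
\|v - \Pi_{h}v\|_{h,*}^2 = \|v - \Pi_{h}v\|_h^2 + \sum_{i=1}^{N} \alpha_i h_i \|\nabla_{\Omega}(v - \Pi_{h,i}v)\|_{L_2(\partial \Omega_i)}^2.
\]
The first summand has just been handled in the previous step. The second is exactly the quantity bounded in \eqref{eqn:interfaceapproximationerror2}, producing $\sum_i \alpha_i h_i^{2r}\|v\|_{H^{1+r}(\Omega_i)}^2$. Adding the two contributions and absorbing constants yields the advertised $C_3$.

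The lemma is therefore essentially a bookkeeping consequence of \propref{prop:globalerrorestimate} and \lemref{lem:interfaceapproximationerror}, so I do not anticipate a substantial obstacle. The points that deserve a moment of care are: verifying that the $L_2$ mass contribution of order $h_i^{2(r+1)}$ is genuinely dominated by the $h_i^{2r}$ gradient term (immediate for bounded mesh sizes, with the residual factor absorbed into $C_2$); checking that $C_2, C_3$ remain independent of the individual $h_i, h_j$ and of the diffusion jumps, which follows because the cited patch-wise estimates already carry $h$-independent constants and the $\alpha_i$ weights have been factored out explicitly; and confirming that the sum over neighbouring facets $F_{ij} \subset \partial \Omega_i$ only contributes a factor depending on the fixed local patch valence, which is already encoded in the constants of \lemref{lem:interfaceapproximationerror}.
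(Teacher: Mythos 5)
Your proposal is correct and follows essentially the same route as the paper, whose proof consists precisely of expanding the definitions of $\|\cdot\|_h$ and $\|\cdot\|_{h,*}$ and invoking \propref{prop:globalerrorestimate} for the bulk terms and \lemref{lem:interfaceapproximationerror} for the interface and boundary-gradient terms. The only difference is that you spell out the bookkeeping (including the observation that the $L_2$ mass term is of higher order and absorbed), which the paper leaves implicit.
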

\begin{proof}
 Following from the definition of the discrete norms \eqref{eqn:dgdiscretenorm} 
and \eqref{eqn:dgdiscretenorm*} together with 
\lemref{lem:interfaceapproximationerror}, we complete the proof.
\qed
\end{proof}
% }
%----------------------------------------------------------------                

Finally, we prove the main result in this section, namely \textit{a priori} 
error estimate for 
surfaces. We will present the results for the discrete norm $\|\cdot \|_h$ and 
the $\|\cdot\|_{L_2(\Omega)}-$norm. 
%--------------------------------------------------------
\begin{theorem}
 \label{thm:discretenormerrorestimate}
  Let $u \in V_0 \cap  H^{1+s}(\Omega_i)$  with $ s > 1/2$ be the exact 
solution of the model \eqref{eqn:variationalformulation} 
  and $u_h \in V_{h}$ with $p\ge 1$ be the discrete solution of the dGIGA scheme 
\eqref{eqn:discretedgvariationalform}. For the penalty parameter $\delta$ chosen 
as in \lemref{lem:dgbilinearcoercivity}, then the discretization error estimate 
  \begin{equation}
   \label{eqn:discretenormerrorestimate}
    \|u - u_h \|^2_h \leq C \sum_{i=1}^{N}  \left( h^{2r}_i 
    + \sum_{F_{ij}  \subset \partial  \Omega_i}  \frac{h_j}{h_i} h_j^{2r}\right) 
\alpha_i\|u\|^2_{H^{1+r}(\Omega_i)},  
  \end{equation}
  holds true, where $r =\min\{s,p\}$ and $p$ denotes the underlying NURBS degree 
of the patch $\Omega_i,$ and $C$ is a positive constant independent of the $h_i$ 
and $h_j.$ 
\end{theorem}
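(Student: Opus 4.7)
The plan is to follow the standard Strang-type argument for symmetric interior penalty discontinuous Galerkin methods, which reduces the error estimate to coercivity, boundedness, Galerkin consistency and the interpolation estimate already proved in \lemref{lem:dginterpolationerrorestimate}. I would first split the error via the triangle inequality as
\begin{equation*}
 \|u-u_h\|_h \leq \|u - \Pi_h u\|_h + \|\Pi_h u - u_h\|_h,
\end{equation*}
where $\Pi_h u \in V_h$ is the patchwise quasi-interpolant of \propref{prop:globalerrorestimate}. The first term is controlled directly by the discrete interpolation bound \eqref{eqn:dginterpolationerrorestimate}, so the remaining work is to estimate the purely discrete quantity $\xi_h := \Pi_h u - u_h \in V_h$.

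To handle $\xi_h$, I would invoke coercivity (\lemref{lem:dgbilinearcoercivity}) to obtain
\begin{equation*}
 \mu_c \|\xi_h\|_h^2 \leq a_h(\xi_h,\xi_h) = a_h(\Pi_h u - u,\xi_h) + a_h(u - u_h,\xi_h).
\end{equation*}
The second piece vanishes by Galerkin consistency: the exact solution $u \in V_0 \cap H^{1+s}(\Omega,\mathcal{T}_h)$ satisfies $a_h(u,v_h) = \ell_h(v_h)$ for every $v_h \in V_h$ (since jumps of $u$ and of $\alpha\nabla_\Omega u\cdot\normal$ vanish across interior facets, so the consistency and penalty terms collapse to the patchwise bilinear forms used in \eqref{eqn:variationalformulation}), while $u_h$ solves \eqref{eqn:discretedgvariationalform}. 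For the first piece I would apply the boundedness bound of \lemref{lem:dgbilinearboundedness}, noting that $\Pi_h u - u \in V_{h,*}$ because $u \in V_0 \cap H^{1+s}(\Omega,\mathcal{T}_h)$ and $\Pi_h u \in V_h$. This yields
\begin{equation*}
 |a_h(\Pi_h u - u,\xi_h)| \leq \mu_b \|\Pi_h u - u\|_{h,*}\,\|\xi_h\|_h,
\end{equation*}
and after dividing by $\|\xi_h\|_h$ I obtain $\|\xi_h\|_h \leq (\mu_b/\mu_c)\|\Pi_h u - u\|_{h,*}$.

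Combining the two contributions and applying the $\|\cdot\|_{h,*}$ interpolation bound \eqref{eqn:dginterpolationerrorestimate*} of \lemref{lem:dginterpolationerrorestimate} (which already absorbs $\|\cdot\|_h \leq \|\cdot\|_{h,*}$ and has the right patchwise weighting by $\alpha_i$ and by the non-matching factors $h_j/h_i$) delivers the claim with $C = (1 + \mu_b/\mu_c)^2\,C_3$, independent of the mesh sizes $h_i, h_j$ and of the jumps in $\alpha$ since $\mu_c$ and $\mu_b$ are independent of those parameters. The main obstacle I anticipate is the consistency step: one must verify carefully that the averages and jumps appearing in $s_i(\cdot,\cdot)$ and $p_i(\cdot,\cdot)$ are well defined for $u \in H^{1+s}(\Omega,\mathcal{T}_h)$ with $s > 1/2$ (which is why the regularity threshold $s > 1/2$ appears in the statement via the trace inequality \eqref{eqn:patchtraceinequality}), and that the normal flux $\alpha\nabla_\Omega u\cdot\normal$ is single-valued across each interior facet so that $\{\alpha\nabla_\Omega u\}\jump{v_h} = \alpha_{ij}\{\nabla_\Omega u\}\jump{v_h}$ and $\jump{u}=0$ kill the extra symmetric and penalty terms. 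The rest is bookkeeping.
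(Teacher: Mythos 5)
Your proposal is correct and follows essentially the same route as the paper: coercivity applied to $\Pi_h u - u_h$, Galerkin orthogonality to replace $u_h$ by $u$, boundedness of $a_h(\cdot,\cdot)$ on $V_{h,*}\times V_h$, and the interpolation estimates of \lemref{lem:dginterpolationerrorestimate}, combined via the triangle inequality. If anything, you are more careful than the paper on the one delicate point, namely verifying that the consistency identity $a_h(u,v_h)=\ell_h(v_h)$ actually holds for $u\in V_0\cap H^{1+s}(\Omega,\mathcal{T}_h)$ with $s>1/2$, which the paper invokes as ``Galerkin orthogonality'' without proof.
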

 \begin{proof}
  By using the coercivity result \lemref{lem:dgbilinearcoercivity}, Galerkin 
orthogonality \eqref{eqn:galerkinorthogonality} and the boundedness of the 
discrete bilinear form, \lemref{lem:dgbilinearboundedness}, we obtain
 \begin{align}
 \label{eqn:galerkinorthogonality}
     \mu_c \|\Pi_{h} u - u_h\|_h^2  \leq a_h(\Pi_{h} u - u_h,\Pi_{h} u - u_h)  
              & =  a_h(\Pi_{h} u - u,\Pi_{h} u - u_h) \notag \\
	      & \leq  \mu_b \|\Pi_{h} u - u\|_{h,*} \|\Pi_{h} u - u_h\|_{h}. 
 \end{align}
Thus, we have
\begin{align}
\label{eqn:dgdiscreteinterpolateenergy}
  \|\Pi_h u - u_h\|_h^2 & \leq \left(\mu_b/\mu_c\right)^2 \|\Pi_h u - u\|_{h,*}^2 
\end{align}
Using \lemref{lem:interfaceapproximationerror}, we get 
\begin{align*}
  \|u - u_h\|_h^2 & \leq \|u - \Pi_h u\|_h^2 + \|\Pi_h u - u_h\|_h^2  \\
  & \leq C_3 \sum_{i=1}^{N} \left(  h_i^{2r} +  \sum_{F_{ij}  \subset \partial  
\Omega_i}\frac{h_j}{h_i} h_j^{2r}\right)\alpha_i\|u\|^2_{H^{1+r}(\Omega_i)} \\ 
  & \qquad + (\mu_b/\mu_c)^2C_4 \sum_{i=1}^{N} \left(  h_i^{2r} +  \sum_{F_{ij}  
\subset \partial  \Omega_i}  \frac{h_j}{h_i} 
h_j^{2r}\right)\alpha_i\|u\|^2_{H^{1+r}(\Omega_i)} \\
  & = C \sum_{i=1}^{N} \left(h_i^{2r} + \sum_{F_{ij}  \subset \partial  
\Omega_i} \frac{h_j}{h_i} h_j^{2r}\right)\alpha_i\|u\|^2_{H^{1+r}(\Omega_i)} ,
\end{align*}
where $C = (C_3 + \left(\mu_b/\mu_c\right)^2 C_4).$ 
\lemref{lem:dgbilinearboundedness}.
\qed
\end{proof}

\begin{remark}
 \label{rem:discretenormerrorestimate}
 If, we assume matching meshes i.e. $h_i = h_j,$ then the \textit{a priori} 
error estimate 
 \eqref{eqn:discretenormerrorestimate} yields 
   \begin{equation}
   \label{eqn:matchingdiscretenormerrorestimate}
    \|u - u_h \|^2_h \leq C \sum_{i=1}^{N}\alpha_i h^{2r}_i 
\|u\|^2_{H^{1+r}(\Omega_i)},  
  \end{equation}
 which has been studied and presented in \cite{LangerMoore:2014a}.
\end{remark}

%----------------------------------------------------------------------------
\section{Numerical Results}
\label{sec:NumericalResults}
In this section, we present numerical results for the dGIGA scheme and \textit{a priori} 
error estimate of \thmref{thm:discretenormerrorestimate}. All the numerical 
experiments have been performed in \gismo see 
\cite{JuettlerLangerMantzaflarisMooreZulehner:2014a}. 
We solve the linear system arising from the dGIGA formulation by means a
preconditioned conjugate gradient (PCG) algorithm with a scaled Dirichlet 
preconditioner where we choose vertex evaluation and edge averages as primal variables in the 
so-called dual-primal isogeometric tearing and interconnecting (dG-IETI-DP) solver. The 
solver is known to be robust with respect to diffusion coefficient see e.g. 
\cite{Hofer:2018}. A reduction of the initial residual factor of $10^{-6}$ is used as a stopping 
criterion together with a zero initial guess. In the examples, we present 
non-matching grid of ratio $h_i/h_j = 2^q,$  where $q$ is the mesh refinement. 
The ratio $h_i/h_j$ denotes the relative number of refinement on the neighboring 
patches and $h_i,h_j$ are the maximum mesh sizes of patches $\Omega_i$ and 
$\Omega_j.$ The penalty parameter is chosen to be $\delta = 2(p+2)(p+1),$ where 
$p$ is the NURBS degree.
The convergence rate is computed using the formula $rate = \log_2 \left( e_{i+1}/e_{i} \right),$ where 
  $e_{i+1} = \|u -u_{h,i+1}\|_h$ and $e_{i} = \|u -u_{h,i}\|_h$ to study the discrete solution of 
  the model problem. We consider as computational domains a quarter cylinder and a torus for the 
  open and closed surfaces respectively, see Fig.~\ref{fig:computationaldomains}.
%----------------------------------------
  \begin{figure}[thb!]
  \centering
      \includegraphics[width=0.45\textwidth]{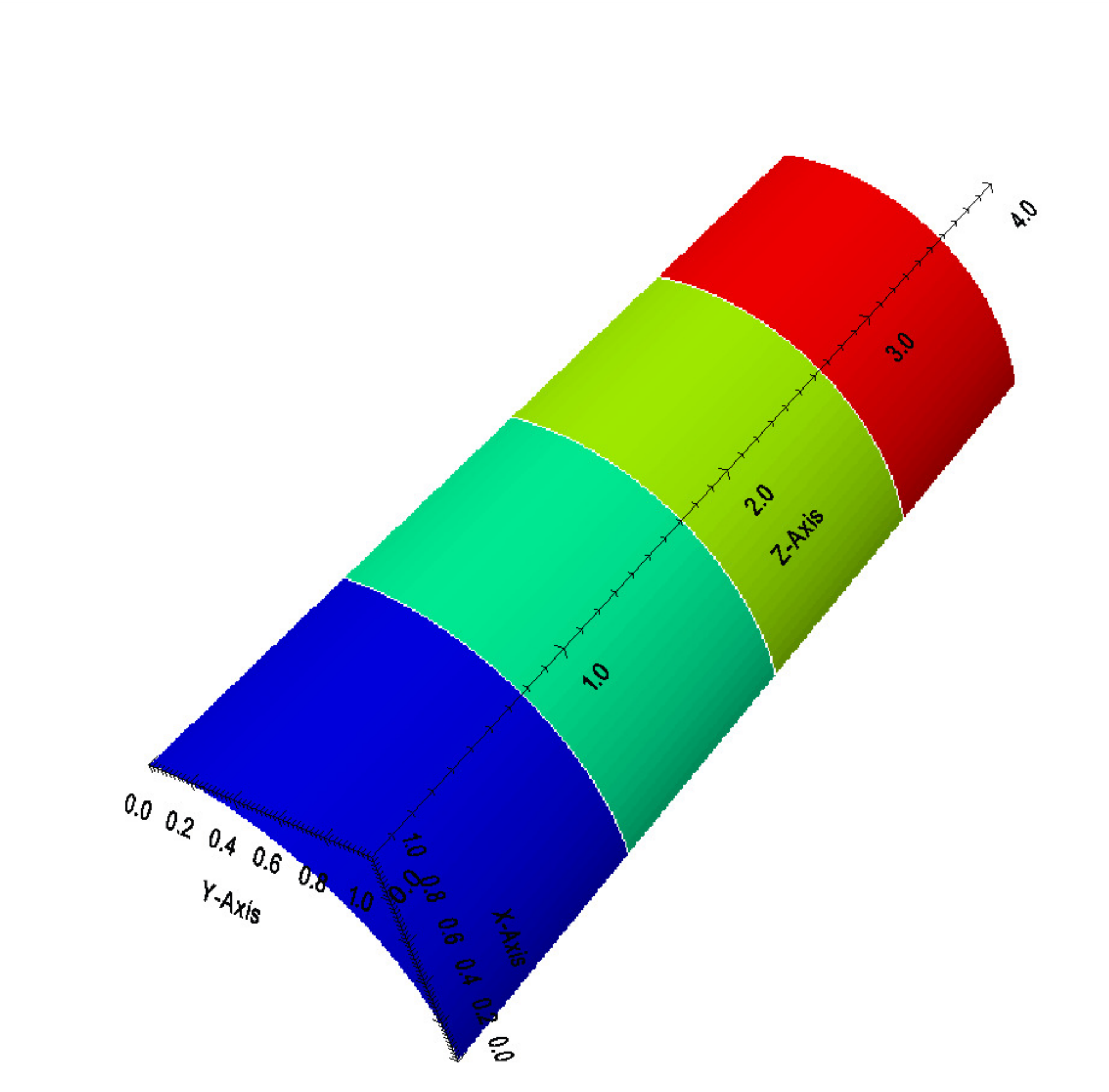}
      \includegraphics[width=0.45\textwidth]{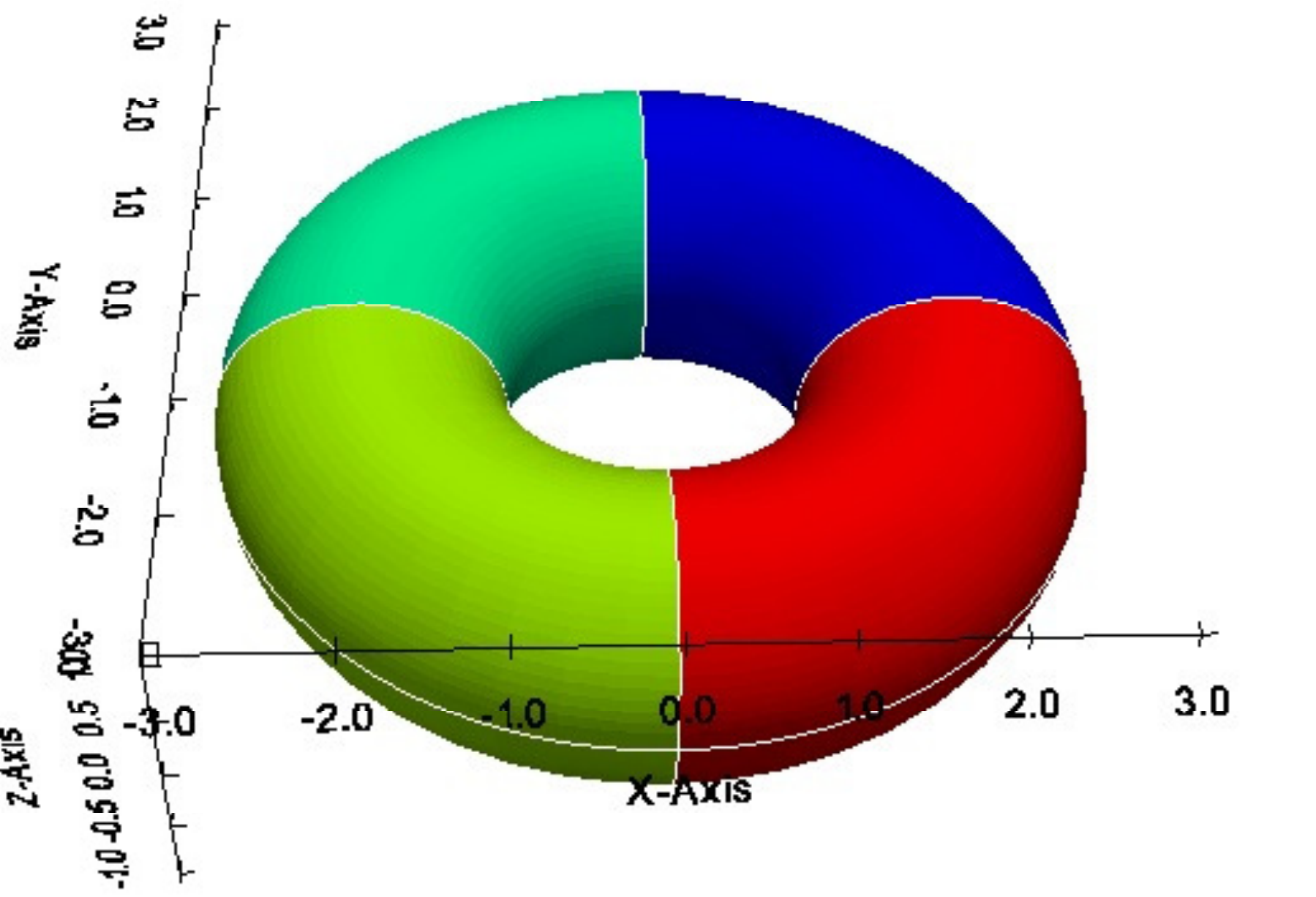}
      \caption{The computational domain consists of four (4) patches represented in different colours 
      $\Omega_i, i=1,\ldots,4$ with corresponding diffusion coefficient 
      $\alpha_i \in \{10^{-4}, 10^{4},10^{-4}, 10^{4}\}.$}
      \label{fig:computationaldomains}
 \end{figure}
%%%%%%%%%%%%%%%%%%%%%%%%%%%%%%%%%%%%%%%%%%%%%%%%%%%%%%%%%%%%%%%%%%%%%%%%%%%%%%%
\subsection{Open Surface}
\label{subsec:opensurface}
We consider a diffusion problem with homogeneous Dirichlet boundary condition on 
an open surface $\Omega$ that is given by a quarter cylinder in the first 
quadrant i.e. $x \geq 0$ and $y \geq 0$ with unitary radius and height $L = 4.$
The computational domain $\Omega$ is decomposed into 4 patches, with each of 
the patches having height of $L = 1$ and depicted by different color as seen on the 
left-hand side of Fig.~\ref{fig:computationaldomains} (left). The knot vectors 
representing the geometry of each patch are given by $\Xi_{1}= \{0, 0, 0, 1, 1, 
1\}$ and $\Xi_{2}= \{0, 0, 1, 1\}$ in the $\xi_1-$direction and  
$\xi_2-$direction respectively.
Let $f(\phi,z) = \varrho \left( \frac{\sigma^2 \pi^2}{L^2}g_{\phi,1}(\phi) - 
g_{\phi,2}(\phi) \right)g_{z}(z),$ where $\phi:= \arctan \left(\frac{x}{y} 
\right),$ $g_{\phi,1}(\phi):= (1- \cos(\phi)) (1-\sin(\phi)),$ 
$g_{\phi,2}(\phi) := (\cos(\phi) + \sin(\phi) -4\sin(\phi)\cos(\phi)),$
and $g_z(z) := \sin \left(\sigma \pi \frac{z}{L} \right)$
for $\sigma \in \mathbb{N}_0$ and $\varrho > 0.$ The exact solution of the 
problem is $u(\phi,z) = \varrho g_{\phi,1}(\phi)g_z(z).$ In our numerical
experiments, we set $\sigma = 3, \varrho = 1/\left(3/2 -\sqrt{2} \right).$
Fig.~\ref{fig:quartercylindergeometry}. 
We present the convergence behavior of the dGIGA scheme with respect to the 
discrete norm $\|\cdot\|_h$ in Fig..~\ref{fig:quartercylindergeometry} by 
successive mesh refinement of ratio $h_i/h_j = 2^q,$ where $q = 1,2,3$ and 
$q=4$ are the refinement level using NURBS of degree $p=2$ and $p =4.$ 
We observe the optimal convergence rate as theoretically predicted 
in \thmref{thm:discretenormerrorestimate} for smooth functions. 
%----------------------------------------
 \begin{figure}[thb!]
  \centering
      \includegraphics[width=0.45\textwidth]{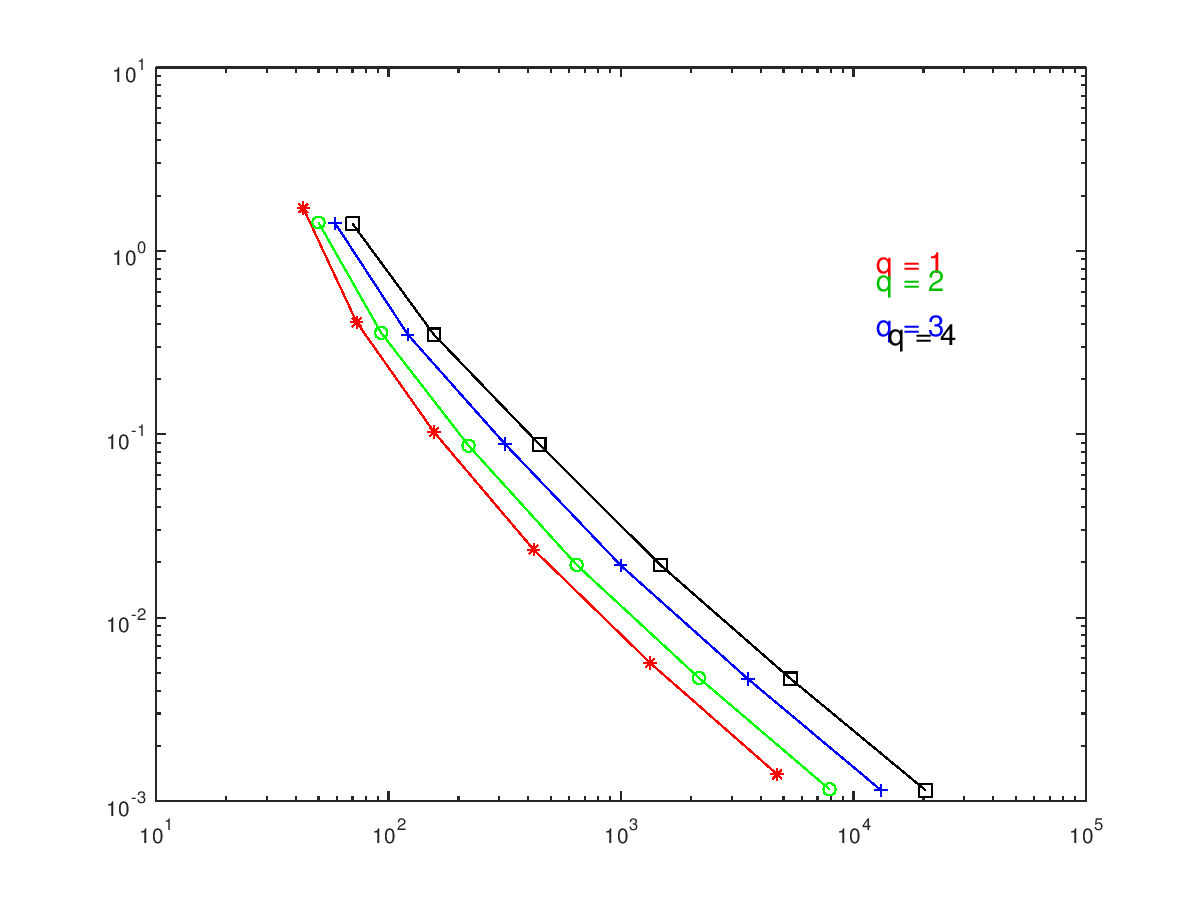}
      \includegraphics[width=0.45\textwidth]{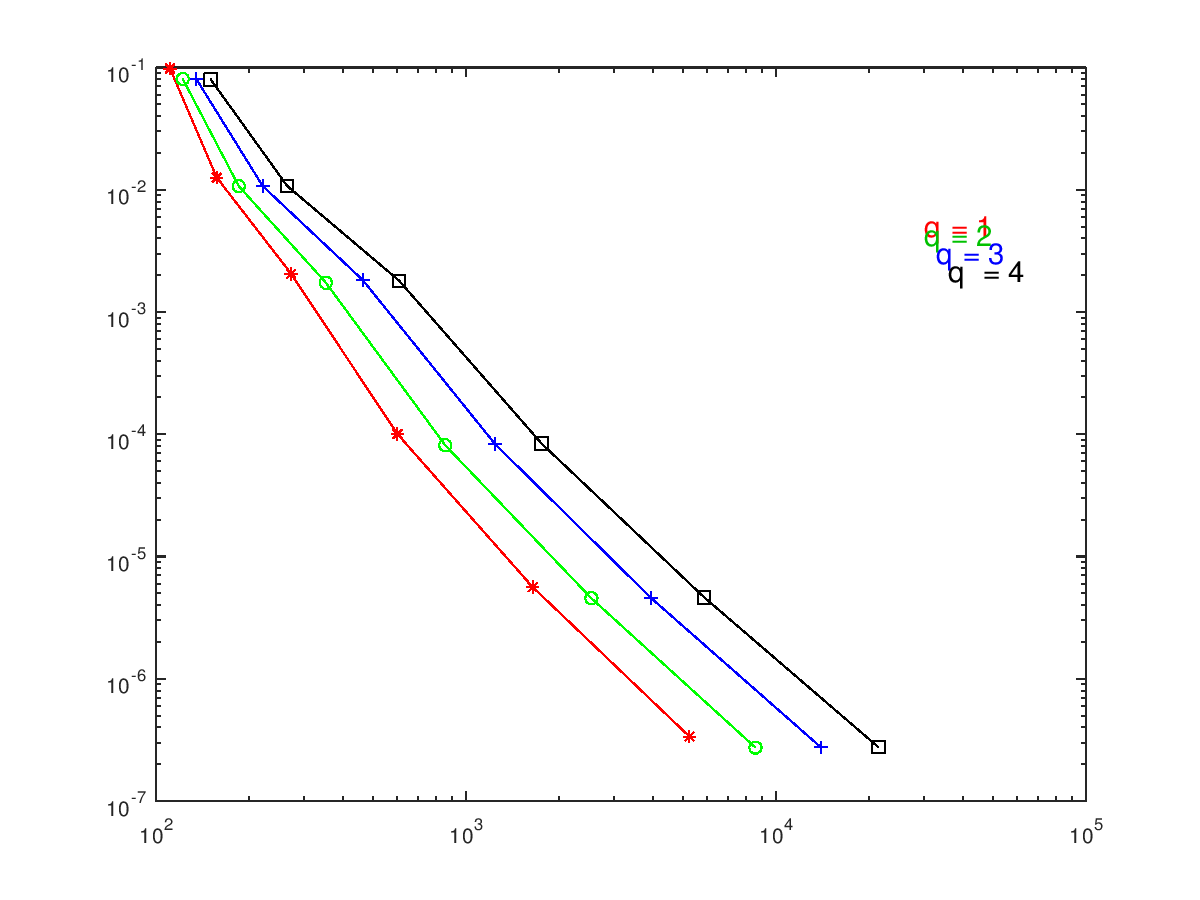}
      \caption{The convergence rate mesh refinement levels $q = 1,2,3$ and $q = 4$ 
      using B-spline degrees $p = 2$ (left) and $p = 4$ (right) for quarter-cylinder.}
      \label{fig:quartercylindergeometry}
 \end{figure}
%-------------
\subsection{Closed Surface}
\label{subsec:closedsurface}
%-------------------------
We consider the closed surface 
$$\Omega = \{(x,y) \in (-3,3)^2, z \in (-1,1): \; r^2 = z^2 + (\sqrt{x^2+y^2} 
-R^2) \},$$ 
that is nothing but a torus decomposed into 4 patches, see 
Fig.~\ref{fig:computationaldomains} (right). The knot vectors describing the 
NURBS used for the 
geometrical representation of the patches
$\Xi_{1}  = \{0, 0, 0, 0.25, 0.25, 0.50, 0.50, 0.75, 0.75, 1, 1, 1\}$
and $\Xi_{2}= \{0, 0, 0, 1, 1, 1\}$ . Let us consider the surface Poisson 
equation with the right-hand side 
\begin{align*}
f(\phi, \theta) &= r^{-2} \left(9\sin(3\phi)\cos(3\theta+\phi)\right) \\
&- \left((R + r\cos(\theta))^{-2} (-10\sin(3\phi)\cos(3\theta+\phi) - 
6\cos(3\phi)\sin(3\theta+\phi)) \right)\\
&- \left( (r(R+r\cos(\theta))^{-1}) (3\sin(\theta)\sin(3\phi)\sin(3\theta+\phi)) 
\right),
\end{align*}
where $\phi = \arctan (y/x)$, $\theta = \arctan (z/(\sqrt{x^2+y^2}-R))$, $R = 2$
and $r=1$.
The exact solution is given by $u= \sin(3 \phi)\cos(3\theta + \phi)$. The 
functions $u$ and $f$ are chosen  such that the zero mean compatibility 
condition holds.
We present the convergence behavior of the dGIGA scheme with respect to the 
discrete norm $\|\cdot\|_h$  by successive mesh refinement of ratio 
$h_i/h_j = 2^q,$ where $q = 1,2,3$ and $q=4$ are the number of mesh refinements 
using NURBS degrees $p=2$ and $p= 4$ see Fig.~\ref{fig:torusgeometry}.
We observe the optimal convergence rate as theoretically predicted in 
\thmref{thm:discretenormerrorestimate} for smooth functions. 
%----------------------------------------
  \begin{figure}[th!]
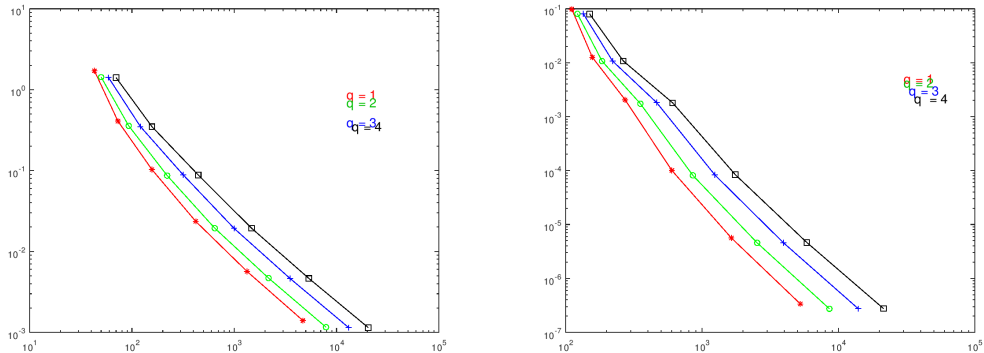

  \centering
      \includegraphics[width=0.45\textwidth]{opensurfacep2}
      \includegraphics[width=0.45\textwidth]{opensurfacep4}
      \caption{The convergence rate mesh refinement levels $q = 1,2,3$ and $q = 
4$ using B-spline degree $p = 2$ (left) and $p = 4$ (right).}
      \label{fig:torusgeometry}
 \end{figure}

%--------------------------
\section*{Conclusion}
In this article, we considered the discontinuous Galerkin isogeometric analysis 
(dGIGA) for the surface diffusion problem with jumping coefficient and geometrically 
non-matching meshes. We analyzed the well-posedness and presented \textit{a 
priori} error estimates. Finally, we presented numerical results confirming the 
theory presented. In solving the linear system arising from the dGIGA scheme, we 
applied the dual-primal discontinuous Galerkin isogeometric tearing and 
interconnecting method (dG-IETI-DP). This involved a Preconditioned Conjugate 
Gradient (PCG) algorithm with the scaled Dirichlet preconditioner which is known to be 
robust with respect to jumping diffusion coefficient. An 
extension of the results to non-orientable surfaces as well evolving surfaces 
will be considered in our next article.
%
%-----------------------
\section*{Acknowledgement}
%%%%%%%%%%%%%%%%%%%%%%%%%%%%%%%%%%%%%%%%%%%%%%
The author acknowledges the Horizon 2020 Programme (2014-2020) under grant 
agreement number 678727.
%-----------------------
\bibliographystyle{plain}
\bibliography{dGIGASurfacePoisson}
%-----------------------
\end{document}